\newtheorem{thm}{Theorem}
\newtheorem{lm}{Lemma}
\newtheorem{prop}{Proposition}
\theoremstyle{definition}
\newtheorem{cor}{Corollary}
\newtheorem{df}{Definition}
\theoremstyle{remark}
\newtheorem{rem}{Remark}
\newcommand{\R}{\mathbb{R}}
\newcommand{\PP}{\mathbb P}
\newcommand{\T}{\mathbb T}
\newcommand{\N}{\mathbb N}
\newcommand{\Z}{\mathbb Z}
\newcommand{\C}{\mathbb C}
\newcommand{\A}{\mathbb A}
\title{The Arnold conjecture in $\C\mathbb P^n$ and the Conley index}
\author[L. Asselle]{Luca Asselle}
\address{Ruhr-Universit\"at Bochum, Fakult\"at f\"ur Mathematik, Universit\"atsstr. 150, 44801, Bochum, Germany}
\email{luca.asselle@rub.de}
\author[M. Izydorek]{Marek Izydorek}
\address{Gda\'nks University of Technology, Gabriela Narutowicza 11/12, 80233 Gda\'nsk, Poland}
\email{marekizydorek@pg.edu.pl}
\author[M. Starostka]{Maciej Starostka}
\address{Gda\'nks University of Technology, Gabriela Narutowicza 11/12, 80233 Gda\'nsk, Poland \newline
Institut f\"ur Mathematik, Naturwissenschaftliche Fakult\"at II, Martin-Luther-Universit\"at Halle-Wittenberg, 06099 Halle (Saale), Germany}
\email{maciejstarostka@pg.edu.pl}
\begin{document}

\maketitle
%
%

\begin{abstract}
In this paper we give an alternative, purely Conley index based proof of the Arnold conjecture in $\C\mathbb P^n$ asserting that a Hamiltonian diffeomorphism of $\C\mathbb P^n$ endowed with 
the Fubini-Study metric has at least $n+1$ fixed points.
\end{abstract}


\section{Introduction}

The Arnold conjecture for closed symplectic manifolds, formulated by Arnold in 1960s, states that a Hamiltonian diffeomorphism $\phi$ of a closed connected symplectic manifold $(M,\omega)$ should have at least 
as many ``contractible'' fixed points as a smooth function $f:M\to \R$ has critical points. Such a conjecture can be seen as a natural generalization of Poincar\'e's last geometric theorem and represents one of the most famous (and still nowadays open in its full generality) problems in symplectic geometry. 

Thus, let $(M,\omega)$ be a closed connected symplectic manifold, and let $H:\T\times M\to \R$, $\T:=\R/\Z$, be a smooth time-dependent one-periodic Hamiltonian function on $M$. 
The Hamiltonian $H$ induces a family $\{\phi_t\}_{t\in \R}$ of symplectic (meaning that $\phi_t^*\omega=\omega$ for all $t\in \R$) diffeomorphisms of $M$, which is usually referred to as the 
flow (actually, $\phi_t$ is not a flow 
as the identity $\phi_s\circ \phi_t = \phi_{s+t}$ is usually not satisfied) of $H$, by 
$$\left \{ \begin{array}{l} \displaystyle \frac{\mathrm d}{\mathrm d t} \phi_t = X_H (\phi_t),\\ \ \ \ \, \phi_0 = \text{id},\end{array}\right .$$ 
where $X_H$ is the (time-dependent) Hamiltonian vector field defined by 
$$\iota_{X_H} \omega = \omega(X_H,\cdot) = - \mathrm d H.$$
A diffeomorphism $\phi$ of $M$ is called \textit{Hamiltonian} if there exists a time-dependent Hamiltonian $H:\T\times M\to \R$ such that $\phi=\phi_1$, i.e. such that the time-one-Hamiltonian flow of $H$ is given by $\phi$.
A fixed point $x\in M$ of a Hamiltonian diffeomorphism $\phi$ is called \textit{contractible} if $[0,1]\ni t\mapsto \phi_t(x)$ is a contractible loop in $M$.

There are many (non-equivalent) ways to state the Arnold conjecture, for instance as an inequality involving the number $|\text{Fix} (\phi)|$ of contractible fixed points of $\phi$ and the minimal number $\text{crit} (M)$ 
of critical points of a smooth function on $M$, the Lyusternik-Schnirelmann category $LS(M)$ of $M$, or the cup-length $CL(M)$ of $M$ (notice that $\text{crit}(M)\geq LS(M)\geq CL(M)$). 
In this setting the most general answer appears in the work of Rudyak-Oprea \cite{Rudyak:99}, which shows that 
$$|\text{Fix}(\phi)| \geq \text{crit}(M)$$
for any closed symplectically aspherical manifold $(M,\omega)$. 

Under the additional assumption that the Hamiltonian diffeomorphism $\phi$ be non-degenerate (meaning that 1 is not an eigenvalue of the linearized map $d\phi$ at 
each fixed point) one can state the Arnold conjecture for instance as 
$$|\text{Fix}(\phi)| \geq \text{Morse}(M) := \min \{  |\text{crit}(f)| \ |\ f \ \text{Morse function on} \ M\}.$$
Arnold noted that such a statement, usually known as the strong Arnold conjecture, holds true if the Hamiltonian is $C^2$-small. For closed symplectic surfaces it was 
proved by Eliashberg \cite{Eliashberg}, for the $2m$-Torus $\mathbb T^{2m}$ equipped with the standard symplectic structure by Conley and Zehnder \cite{Conley} (see also \cite[Chapter 6]{Hofer}), 
and for the complex projective space $\mathbb C\mathbb P^n$ 
equipped with the Fubini-Study form by Fortune \cite{Fo}. Despite these contributions, the strong Arnold conjecture is still open. 
The weaker statement known as homological Arnold conjecture, which reads
$$|\text{Fix}(\phi)| \geq \sum_{i=0}^{\dim M} \dim H_i(M;\mathbb Q),$$
has been proved by Floer \cite{Floer:88} in the case of symplectically aspherical manifolds. 
The validity of the homological Arnold conjecture has been then extended to general closed symplectic manifolds by Floer \cite{Floer:89}, Fukaya-Ono \cite{Fukaya:1,Fukaya:2}, 
Liu-Tian \cite{Liu}, and Ruan \cite{Ruan}, building on the original work of Floer. For a comprehensive discussion about all variants of the Arnold conjecture as well as about the state of the art on each of them we refer to the expository note \cite{Golovko}. 

In the degenerate case, the Arnold conjecture is still open (in any of the form stated above) for general non symplectically aspherical manifolds, 
e.g. for toric manifolds, and even in rather simple cases such as the product 
$\T^{2m}\times \C\mathbb P^n$ equipped with the standard product symplectic form. In the latter case, the best known result is due to Oh \cite{Oh} and states that 
any Hamiltonian diffeomorphism on $\T^{2m}\times \C\mathbb P^n$ has at least $n+1$ contractible fixed points. There, the author also raises doubts on the validity of the degenerate Arnold conjecture (in any of its form) 
in full generality. For other partial results we refer e.g. to \cite{Givental,Schwarz}. In any case, the degenerate Arnold conjecture seem at the moment out of the reach of Floer theoretical methods. With this in mind, the 
authors in collaboration with Abbondandolo started few years ago a program aimed at developing alternative and complementary approaches 
to Hamiltonian Floer theory, both for abstract functionals and for the Hamiltonian action functional on the loop space of certain symplectic manifolds: In the paper \cite{Asselle} and the subsequent preprint \cite{Abbo} 
the first and third named authors together with Abbondandolo define a Morse complex for the Hamiltonian action functional over a ``mixed regularity'' space of loops in the cotangent bundle $T^*M$ of a closed manifold $M$. 
As one expects, the resulting homology is isomorphic to the Floer homology of $T^*M$ as well as to the singular homology of the free loop space of $M$. The advantage of such an approach over Floer homology relies 
on the fact that having a globally defined gradient flow with nice properties allows a much larger class of perturbations which in turns reflects into the fact that transversality issues can be more easily overcome than in Floer theory. Also, the gradient flow equation need not necessarily be local, and this allows to treat also non-local problems. 

In the present paper we give a purely Conley index based proof of the Arnold conjecture in $\C\mathbb P^n$.
This goes the direction of developing methods alternative to Floer theory which are more topological in nature, and which can be applied e.g. to the Hamiltonian action on the space of loops on toric manifolds, with the concrete motivation of improving the known results about the degenerate Arnold conjecture on such a class of symplectic manifolds. 
The advantage relies here on the fact that the resulting theory should be substantially richer than Floer homology. Indeed, there are certain constructions which seem to be too complicated to be obtained by Floer homology, due among
other problems to the already mentioned transversality issues, but which can be obtained by a theory which allows the use of arguments from Conley theory and homotopy. Examples can be found for instance in the work of Kragh \cite{Kragh1,Kragh2} and have led to a proof of a homotopy version of the nearby Lagrangian conjecture. 

\begin{thm}[\cite{Fo}]
Let $\phi$ be a Hamiltonian diffeomorphism of $\C\mathbb P^n$ endowed with the Fubini-Study symplectic form. Then 
$$|\mathrm{Fix}(\phi)| \geq n+1.$$ 
\label{thm:main}
\end{thm}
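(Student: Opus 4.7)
The plan is to lift the problem via the Hopf fibration $S^{2n+1}\to \C\PP^n$ to an $S^1$-equivariant variational problem on loops in $\C^{n+1}$, and then apply a Galerkin-type finite-dimensional reduction combined with an $S^1$-equivariant cup-length estimate for the Conley index. I begin by extending $H:\T\times \C\PP^n\to\R$ to an $S^1$-invariant Hamiltonian $\tilde H:\T\times \C^{n+1}\to\R$, with $S^1$ acting diagonally by scalar multiplication, whose flow preserves $S^{2n+1}$ and descends there to the flow of $H$. Contractible fixed points of $\phi$ on $\C\PP^n$ then correspond to $S^1$-orbits of $1$-periodic orbits of $\tilde H$ on $S^{2n+1}$, which are precisely the constrained critical points of the Hamiltonian action functional $\mathcal A_{\tilde H}$ on $\HS(\T,\C^{n+1})$ whose image lies pointwise on $S^{2n+1}$.

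Since the quadratic part of $\mathcal A_{\tilde H}$ is strongly indefinite on $\HS(\T,\C^{n+1})$, having infinite-dimensional positive and negative eigenspaces, the Conley index cannot be formed directly in infinite dimensions. I would therefore pass to a Galerkin approximation by projecting onto the $S^1$-invariant, finite-dimensional subspace $E_N\subset \HS(\T,\C^{n+1})$ of loops with Fourier modes $|k|\le N$, and consider the gradient flow of $\mathcal A_{\tilde H}|_{E_N}$. Standard \emph{a priori} estimates should show that for $N$ large enough all critical points of $\mathcal A_{\tilde H}|_{E_N}$ lying in a suitably chosen bounded, $S^1$-invariant neighborhood $U_N$ of the constant-loop unit sphere in $\C^{n+1}$ are in bijection with $1$-periodic orbits of $\tilde H$ on $S^{2n+1}$, and that the maximal invariant set $S_N\subset U_N$ is isolated for the $S^1$-equivariant gradient flow on $E_N$.

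The core of the argument is then the computation of the $S^1$-equivariant Conley index of $S_N$. By continuation from $\tilde H$ to the lift of a $C^2$-small autonomous Morse Hamiltonian on $\C\PP^n$, for which the theorem reduces to the classical Morse inequalities on $\C\PP^n$, the reduced Borel cohomology of the equivariant Conley index of $S_N$ should carry a nontrivial cup-product of $n$ classes, reflecting the cup-length $\CL(\C\PP^n)=n$. A Lyusternik--Schnirelmann style argument valid for isolated invariant sets of gradient-like flows then forces at least $\CL(\C\PP^n)+1=n+1$ distinct rest points of the gradient flow in $S_N$ modulo the $S^1$-action, and these descend to $n+1$ distinct contractible fixed points of $\phi$ on $\C\PP^n$.

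The main obstacle I expect is the last step. One has to prove that the $S^1$-equivariant Conley index of $S_N$ stabilizes in $N$ up to a controlled equivariant suspension, that the $\C\PP^n$ cup-length survives the passage to the equivariant setting via the Borel construction, and finally that distinct critical $S^1$-orbits of loops in $\C^{n+1}$ actually descend to distinct fixed points on $\C\PP^n$. The last subtlety is where the freeness of the $S^1$-action on $S^{2n+1}$ must be combined carefully with the structure of the loop space and the time-reparametrization symmetry of $\mathcal A_{\tilde H}$.
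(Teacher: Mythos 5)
Your outline captures the broad architecture shared with the paper: lift to $\C^{n+1}$ via the Hopf fibration, pass to the Hamiltonian action on $\HS$ loops, Galerkin-reduce, and extract $n+1$ orbits from a cup-length estimate on the Conley index. But two of your steps would break, and the second is exactly where the paper's real work lies.

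First, the variational setup. You say contractible fixed points of $\phi$ correspond to $S^1$-orbits of $1$-periodic orbits of $\tilde H$ on $S^{2n+1}$. This is not true: a $1$-periodic solution on $\C\PP^n$ lifts only to a path in $S^{2n+1}$ that solves $\dot x = J\nabla\tilde H(x)$ and closes up modulo the Hopf $S^1$-action, i.e.\ $x(1)=e^{2\pi\lambda J}x(0)$ for some $\lambda\in\R$. To make these into honest critical points one must introduce the rotation number $\lambda$ as an extra variable, replace the equation by $\dot x = J(\nabla\tilde H(x)-2\pi\lambda x)$, and work on $E\times\R$ with a cutoff $\chi$ in $\lambda$ so that only $\lambda\in(\lambda_0,\lambda_0+1)$ contributes; the interval endpoints $\lambda_0,\lambda_0+1$ must be chosen so the corresponding equations have no nonzero solutions. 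Without this extended functional $\mathcal A_H(x,\lambda)$ you do not detect all the fixed points, and you also have no $L^2$-normalization (or $S^{2n+1}$-constraint) mechanism to quotient out the scaling freedom of the lift.

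Second, and more seriously, your continuation step fails. You propose to deform $\tilde H$ to the lift of a $C^2$-small Morse Hamiltonian and transfer the cup-length via continuation invariance of the Conley index. But along such a homotopy, the operators $\jmath^*F_{\lambda_0}$ and $\jmath^*F_{\lambda_0+1}$ controlling the boundary $\{\lambda=\lambda_0\}$ and $\{\lambda=\lambda_0+1\}$ will in general pick up zeros — a fixed $\lambda_0$ that is ``nonresonant'' for the original $H$ typically becomes resonant somewhere along $s\cdot H$. At that moment critical points escape to the boundary of the isolating block in the $\lambda$-direction and isolation is destroyed, so continuation invariance does not apply. The paper gives the explicit counterexample $H_0\equiv\pi/2$ to show the linear homotopy cannot work. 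This is why the argument must take a different route: one shows (using the $S^1$-equivariance of $\nabla H$ and a shift operator $\mathrm{Sh}$) that $\jmath^*F_{\lambda_0+1}$ is IA-homotopic to $\mathrm{Sh}^{-1}\circ(\jmath^*F_{\lambda_0})\circ\mathrm{Sh}$, deforms both endpoints to product-type vector fields that agree on all Fourier modes except one, and — crucially — establishes nontriviality of the resulting Conley index $h_{\lambda_0}$ without ever computing it explicitly, via McCord's theorem relating the Euler characteristic of the index to the Brouwer degree of the (odd, hence odd-degree) gradient. Only then does the product formula for the relative cup-length give $\geq n+1$. Your plan has no substitute for this nontriviality argument once continuation to the $C^2$-small model is blocked.
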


\vspace{-4mm}

We finish this introduction with a brief summary of the content of the paper: 
\begin{itemize}
\item In Section 2 we recall the definition of the Conley index for a suitable class of gradient flows on a Hilbert space as well as the notion of relative cup-length. 
Also, we recall the correspondence between the fixed points of a Hamiltonian diffeomorphism of $\C\mathbb P^n$ and the critical points of a suitably defined Hamiltonian action over the space of loops in 
$\R^{2n+2}$ with Sobolev regularity $1/2$. 
\item In Section 3, we introduce the abstract notion of IA-homotopy and show that the Conley index 
of the maximal bounded invariant set for a suitable class of flows in the product space $H^{1/2}(\T,\R^{2n+2})\times \R$ 
depends only on the IA-homotopy classes of the end-points. 
\item In Section 4 we first sketch the proof of Theorem~\ref{thm:main} in the easier case of $C^0$-small Hamiltonians, and then prove Theorem \ref{thm:main} for an arbitrary Hamiltonian. We shall stress already at this point that proving the Arnold conjecture for $C^0$-small Hamiltonians is not enough since the Hofer diameter of $\C\mathbb P^n$ is infinite, see \cite[Section 6]{Polterovich}. On the other hand, the Arnold conjecture in the $C^0$-small case does not follow from Arnold's original observation, which applies only if the Hamiltonian is $C^2$-small. 
\end{itemize}

\vspace{2mm}

\noindent \textbf{Acknowledgments:} We thank Alberto Abbondandolo for many useful discussions. This research is supported by the DFG-project 380257369 ``Morse theoretical methods
in Hamiltonian dynamics''. M.I. is supported by the Beethoven2-grant 2016/23/G/ST1/04081 of
the National Science Centre, Poland. M.S. is supported by the DFG-grant 459826435 ``The equivariant spectral flow and bifurcation for indefinite functionals with symmetries''.


\section{Preliminaries}


\subsection{Conley index in a Hilbert setting} In this subsection we recall how to define a notion of Conley index for gradient flows of the form ``compact perturbation of a fixed Fredholm operator'' on a Hilbert space, which for convenience will be always assumed to be the space of Sobolev loops of regularity $H^{1/2}$ in $\R^{2n+2}$. The main reference is here \cite{gip}. For the classical definition of the Conley index on a locally compact 
metric space we refer e.g. to \cite{Conley:78,Salamon}. Thus, we 
set 
 $$E:= H^{1/2}(\T,\R^{2n+2}).$$
 and endow it with the standard $H^{1/2}$-scalar product, 
 and consider the canonical orthogonal splitting 
 $$E \cong E^+\oplus E^-\oplus E^0,$$
 where 
 $$E^+:= \Big \{x= \sum_{k\in \N} e^{2\pi k J t}\, x_k \in E \ \Big | \ x_k\in \R^{2n+2}\Big \}, \quad E^-:= \Big \{ x= \sum_{k\in \N} e^{-2\pi k J t} x_k \in E \ \Big |\ x_k \in \R^{2n+2}\Big \},$$
 and $E^0\cong \R^{2n+2}$ is the space of constant loops. Here, $J$ denotes the standard complex structure of $\R^{2n+2}$.
 We define 
\begin{equation}
\label{L}
L:E\to E,\quad L x =  L(x^+ + x^-+x^0) := x^+-x^-,
\end{equation}
 where $x^\pm,x^0$ are the orthogonal projections of $x\in E$ onto $E^\pm,E^0$ respectively, and consider
 gradient vector fields of the form
 \begin{equation}
 \label{functional}
 F= L+K
 \end{equation}
 where $K$ is Lipschitz continuous and maps bounded sets into relatively compact sets. In the literature vector fields of this kind are usually referred to as $\mathcal L\mathcal S$-\text{vector fields}. 
 We further denote the flow generated by the vector field $F$ with $\eta$ and call it hereafter an $\mathcal L \mathcal S$-\text{flow}. 
 
 \begin{df} Let $\eta$ be an $\mathcal L \mathcal S$- flow on $E$. A closed and bounded set $\Omega \subset E$ is called an \textit{isolating neighborhood} for $\eta$ if 
 $$\text{inv}\, (\Omega,\eta) := \Big \{x\in \Omega\ \Big |\ \eta(t,x )\in \Omega, \ \forall t \in\R\Big \} \subset \text{Int}\, \Omega.$$
 An invariant set $S$ for the flow $\eta$ is called:
 \begin{itemize}
 \item an \textit{isolated invariant set} if there exists an isolating neighborhood $\Omega$ such that $\text{inv} \, (\Omega,\eta) = S$,
 \item a \textit{maximal bounded invariant set} if there exists $R_0>0$ such that $\text{inv}\, (B(R),\eta)=S$ for every $R\geq R_0$, where $B(R)$ denotes the ball with radius $R$ around the origin in $E$. 
 \end{itemize}
Clearly, a maximal bounded invariant set is isolated. 
\end{df}

For $\mathcal L \mathcal S$-flows there is a well defined notion of Conley index, as we now recall.  
To this purpose we start observing that an isolating neighborhood $\Omega$ yields an isolating neighborhood for every sufficiently large finite dimensional approximation of $E$. In what follows we consider the following finite 
dimensional subspaces of $E$: for $k,l\in \N$ we set 
$$E_k := \Big \{ e^{2\pi k J t} q \ \Big |\ q \in \R^{2n+2}\Big \}, \quad E^{-k,l} := \bigoplus_{i=-k}^l E_i,$$
and denote by 
$$A^{-k,l} := A \cap E^{-k,l}, \quad \forall A\subset E.$$
For any $\mathcal L\mathcal S$-vector field $F$  we further set 
$$F^{-k,l}:= \big ( L + \pi^{-k,l} \circ K\big )  \Big |_{E^{-k,l}}, $$
where $\pi^{-k,l}:E\to E$ is the orthogonal projection onto $E^{-k,l}$, and denote by $\eta^{-k,l}$ the induced flow.

\begin{lm}[\cite{gip}, Lemma 4.1]\label{lem:1}
Let $\eta$ be an $\mathcal L\mathcal S$-flow, and suppose that $\Omega$ is an isolating neighborhood for $\eta$. Then there exists 
$n_0\in \N$ such that for every $k,l>n_0$ the set $\Omega^{-k,l}$ is an isolating neighborhood for $\eta^{-k,l}.$ \qed
\end{lm}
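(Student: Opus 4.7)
The plan is to argue by contradiction and exploit the $\mathcal L\mathcal S$-structure of the vector field. If the conclusion failed, then for each $n\in \N$ I could pick indices $k_n, l_n > n$ and a point $x_n \in \Omega^{-k_n,l_n}$ whose full $\eta^{-k_n,l_n}$-orbit $\gamma_n:\R\to E^{-k_n,l_n}$ stays in $\Omega^{-k_n,l_n}$ for all time, yet $x_n$ is not in the $E^{-k_n,l_n}$-interior of $\Omega^{-k_n,l_n}$. A short observation shows that this forces $x_n$ to lie on the $E$-boundary $\partial\Omega$: any $E$-open ball around $x_n$ contained in $\Omega$ would intersect $E^{-k_n,l_n}$ in an open neighborhood of $x_n$ contained in $E^{-k_n,l_n}\cap\Omega=\Omega^{-k_n,l_n}$, contradicting the interior obstruction.

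The heart of the argument is then to extract a limit orbit of the original flow $\eta$ from the approximating orbits $\gamma_n$. The curves $\gamma_n$ are uniformly bounded in $E$ (since $\Omega$ is bounded) and satisfy
$$\dot\gamma_n(t) = L\gamma_n(t) + \pi^{-k_n,l_n} K(\gamma_n(t)).$$
The key inputs are that $K$ maps bounded sets to relatively compact ones in $E$, that $\pi^{-k_n,l_n}\to \text{Id}$ strongly as $n\to \infty$, and that $L$ is hyperbolic along $E^+\oplus E^-$. Writing $\gamma_n$ in Duhamel form and splitting along $E^+\oplus E^-\oplus E^0$, the projections $\gamma_n^\pm$ are expressed as integrals of $K(\gamma_n)$ against exponentially decaying kernels in the correct time direction (forward for $E^-$, backward for $E^+$), while the finite-dimensional component $\gamma_n^0$ enjoys trivial compactness. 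A standard Arzel\`a--Ascoli diagonal argument then yields, after passing to a subsequence, a continuous curve $\gamma_\infty:\R\to E$ with $\gamma_n\to\gamma_\infty$ locally uniformly in $E$ and $\gamma_\infty$ an orbit of $\eta$.

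Since $\Omega$ is closed, $\gamma_\infty(\R)\subset\Omega$, so $\gamma_\infty(0)\in \text{inv}(\Omega,\eta)$; since $\partial\Omega$ is closed and $x_n\in\partial\Omega$, also $\gamma_\infty(0)\in\partial\Omega$. This contradicts the isolating hypothesis $\text{inv}(\Omega,\eta)\subset \text{Int}\,\Omega$ and completes the proof.

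The main obstacle is the compactness step producing $\gamma_\infty$: one needs strong (not merely weak) convergence of the initial data $x_n=\gamma_n(0)$ in $E$, since only strong convergence propagates through $\partial\Omega$ and through the nonlinear term $K$. Weak compactness from boundedness in $E$ is a priori insufficient, and the substitute is precisely that, for a \emph{full-time} bounded orbit of a hyperbolic-plus-compact vector field, the value at any time is reconstructed from the compact perturbation $K(\gamma_n)$ through exponentially weighted integrals. This is what converts boundedness of $\gamma_n$ into genuine strong compactness and is the single non-formal ingredient of the argument.
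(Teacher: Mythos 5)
The paper cites this as Lemma~4.1 of \cite{gip} and supplies no proof; the only hint it offers is the remark just below the statement that $n_0$ is chosen so that $\langle (L+K)x, Lx\rangle>0$ on $\Omega$ away from $E^{-n_0,n_0}$, which points to a direct, constructive choice of $n_0$. Your argument by contradiction is a correct, somewhat softer alternative. The localisation $x_n\in\partial\Omega$ is right, and the compactness step is handled by exactly the correct mechanism: for a bounded full orbit the Duhamel formula gives $\gamma_n^\pm(t)$ as integrals of $\pi^{-k_n,l_n}K(\gamma_n(\cdot))$ against exponentially decaying kernels over the appropriate half-line, and the set $\bigl\{\pi^{-k,l}K(x) : x\in\Omega,\ k,l\ge 1\bigr\}$ is relatively compact (compactness of $K|_\Omega$ together with $\pi^{-k,l}\to\mathrm{Id}$ strongly), so the weighted averages lie in a fixed compact convex set. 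Together with $\dim E^0<\infty$ and the uniform Lipschitz bound on $\gamma_n$ coming from boundedness of $F^{-k,l}$ on $\Omega$, Arzel\`a--Ascoli plus a diagonal extraction yields a strong, locally uniform limit $\gamma_\infty$; passing to the limit in the integral form of the ODE (using that $K$ is Lipschitz and $\pi^{-k_n,l_n}\to\mathrm{Id}$ strongly) shows $\gamma_\infty$ is a bounded $\eta$-orbit through a point of $\partial\Omega$, contradicting $\mathrm{inv}(\Omega,\eta)\subset \Int\,\Omega$.

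The trade-off against what the paper hints at is constructivity: your route establishes existence of $n_0$ without exhibiting it, whereas the quantitative choice of $n_0$ in \cite{gip} (large enough that the high-mode part of $K$ is dominated by $L$ on $\Omega$) is precisely the one that is then reused for the stabilisation statement of Lemma~\ref{lem:2}. Both proofs rest on the same structural fact about $\mathcal L\mathcal S$-flows --- that boundedness of a full orbit is automatically upgraded to precompactness via the hyperbolicity of $L$ and the compactness of $K$ --- but you package it as a limiting argument rather than as a pointwise estimate on $\Omega$.
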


Roughly speaking, the positive integer $n_0$ is chosen in such a way that the scalar product of the vector fields $L+K$ and $L$ is positive on the orthogonal complement of $E^{-n_0,n_0}$.
The lemma above implies that for any isolating neighborhood $\Omega$ for an $\mathcal L \mathcal S$-flow $\eta$ we have a family of Conley indices 
$$h^{-k,l}(\Omega,\eta) := h^{-k,l}(\Omega^{-k,l},\eta^{-k,l}),\qquad k,l>n_0.$$
As it is shown in \cite{gip}, such homotopy types stabilize in the following sense. 

\begin{lm}\label{lem:2}
Let $\eta,\Omega$ be as in the statement of Lemma \ref{lem:1}. Then there exists $n_0\in \N$ such that for any $k'>k>n_0$ and $l'>l>n_0$ we have 
$$h^{-k',l'}(\Omega,\eta) = S^{(2n+2)(l'-l)}\cdot h^{-k,l}(\Omega,\eta),$$
meaning that the Conley index $h^{-k',l'}(\Omega,\eta)$ is obtained from $h^{-k,l}(\Omega,\eta)$ by a suspension of dimension equal to the dimension of $E^{l+1,l'}$. \qed
\end{lm}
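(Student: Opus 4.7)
The plan is to homotope the finite-dimensional vector field $F^{-k',l'}$, through vector fields for which $\Omega^{-k',l'}$ remains an isolating neighborhood, to a product vector field on the $L$-invariant orthogonal decomposition
$$E^{-k',l'} \;=\; E^{-k',-k-1}\oplus E^{-k,l}\oplus E^{l+1,l'},$$
and then to invoke the product formula for the Conley index. The key observation is that $L$ restricts to $-\Id$ on $E^{-k',-k-1}$ (a pure attractor direction) and to $+\Id$ on $E^{l+1,l'}$ (a pure repeller direction), so replacing the compact perturbation by its restriction to $E^{-k,l}$ turns the full flow into the direct sum $(-\Id)\oplus F^{-k,l}\oplus (+\Id)$.

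Concretely, for $s\in[0,1]$ I would consider the family of vector fields on $E^{-k',l'}$
$$F_s(x) \;:=\; L(x) + (1-s)\,\pi^{-k',l'}K(x) + s\,\pi^{-k,l}K\bigl(\pi^{-k,l}x\bigr),$$
which interpolates between $F_0=F^{-k',l'}$ and the product vector field $F_1=(-\Id)\oplus F^{-k,l}\oplus (+\Id)$. Granting that $\Omega^{-k',l'}$ is isolating for every $F_s$, the continuation property of the finite-dimensional Conley index gives $h^{-k',l'}(\Omega,\eta) = h(\Omega^{-k',l'},F_1)$. The product formula applied to $F_1$, combined with the elementary identities
$$h\bigl(B_{E^{-k',-k-1}},-\Id\bigr) \;=\; S^0, \qquad h\bigl(B_{E^{l+1,l'}},+\Id\bigr) \;=\; S^{(2n+2)(l'-l)},$$
where $B_V$ denotes a sufficiently large closed ball in $V$, then yields $h^{-k',l'}(\Omega,\eta) = S^{(2n+2)(l'-l)}\wedge h^{-k,l}(\Omega,\eta)$, which is the claimed suspension.

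The main obstacle is verifying that $\Omega^{-k',l'}$ remains isolating along the whole family $\{F_s\}_{s\in[0,1]}$, for some $n_0$ independent of $s$. The argument is modelled on the paragraph following Lemma~\ref{lem:1}: it suffices to ensure that $\inner{F_s(x)}{L(x)}>0$ holds for every $s\in[0,1]$ and every $x$ in the orthogonal complement of $E^{-n_0,n_0}$ intersected with $\Omega^{-k',l'}$. Since $K$ is Lipschitz and maps bounded sets into relatively compact sets, both $\pi^{-k',l'}K$ and $\pi^{-k,l}K\circ\pi^{-k,l}$ are uniformly bounded on $\Omega$ and their high-frequency components tend to zero as $n_0\to\infty$, while $L$ has eigenvalues $\pm 1$ on those modes; this gives the required positivity uniformly in $s$ after enlarging $n_0$ if necessary. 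With this stability in hand, the continuation and product steps go through routinely, completing the proof.
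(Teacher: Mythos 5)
Your proposal is correct and follows the same stabilization argument used in \cite{gip}, to which the paper defers for this lemma (it is stated with no proof): homotope the compact part linearly to $\pi^{-k,l}\circ K\circ\pi^{-k,l}$ so that the end flow splits as the product of $-\Id$ on $E^{-k',-k-1}$ (Conley index $S^0$), $F^{-k,l}$ on $E^{-k,l}$, and $+\Id$ on $E^{l+1,l'}$ (Conley index $S^{(2n+2)(l'-l)}$), then apply the smash-product formula for Conley indices. The compactness of $K$ gives, exactly as you observe, an $n_0$ that works uniformly along the homotopy, so $\Omega^{-k',l'}$ stays isolating for every $F_s$ and the finite-dimensional continuation property applies.
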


\begin{rem}
By the continuation property of the Conley index, Lemmas \ref{lem:1} and \ref{lem:2} generalize to families $\{\eta_s\}_{s\in [0,1]}$ of $\mathcal L \mathcal S$-flows admitting a common isolating neighborhood $\Omega$. \qed
\end{rem}


\subsection{Relative cup-length of the Conley index} 

In this subsection we recall the notion of relative cup-length for the Conley index in locally compact metric spaces following \cite{KGUss}. Such a notion already appears in \cite{Floer:87} (see also \cite[Page 598]{Floer:89} for an analogous 
construction in the loop space) and is used to prove a global and topological continuation theorem for normally hyperbolic invariant sets. 
Thus, let $A\subset X\subset Y$ be compact metric spaces. 
Denoting the Alexander-Spanier cohomology by $H^*$, we easily see that $H^*(X,A)$ has a natural structure of $H^*(Y)$-module with multiplication induced by the cup-product via
$$\beta \cdot \alpha := \iota^* \beta \cup \alpha, \qquad \forall \alpha \in H^*(X,A), \, \forall \beta \in H^*(Y),$$
where $\iota:X\to Y$ denotes the standard inclusion. 

\begin{df}
The \textit{relative cup-length} $\mathcal Y(X,A;Y)$ of the $H^*(Y)$-module $H^*(X,A)$ is equal to:
\begin{itemize}
\item $0$, if $H^*(X,A)=0$.
\item $1$, if $H^*(X,A)\neq 0$ and 
$$\beta\cdot \alpha = 0,\qquad \forall \alpha \in H^*(X,A), \, \forall \beta \in H^{>0}(Y).$$
\item $k\geq 2$, if there exists $\alpha_0\in H^*(X,A)$ and $\beta_1,...,\beta_{k-1}\in H^{>0}(Y)$ such that 
$$(\beta_1 \cup ... \cup \beta_{k-1})\cdot \alpha_0 \neq 0,$$
and 
$$(\gamma_1\cup ... \cup \gamma_k )\cdot \alpha =0, \qquad \forall \alpha \in H^*(X,A), \ \forall \gamma_1,...,\gamma_k \in H^{>0}(Y).$$
\end{itemize}
\end{df}

\begin{rem}
In \cite{KGUss} only powers of one single $\beta\in H^*(Y)$ were considered in the definition of the relative cup-length, the reason being the fact that this was enough 
for the applications the authors had in mind (indeed, the cup-length of $\mathbb C \mathbb P^n$ is clearly realized taking powers of a single cohomology class). 
For future reference it is here more convenient to give a general definition of relative cup-length which allows to consider product of different cohomology classes in $H^*(Y)$. 
It is easy to check that all properties of the relative cup-length discussed in \cite{KGUss} remain unchanged with this more general definition.
\qed
\end{rem}

With any isolating neighborhood $\Omega$ for a flow $\phi$ on a locally compact metric space (for our purposes, $\phi$ will always be of the form $\eta^{-k,l}$ for some $\mathcal L\mathcal S$-flow $\eta$ on $E$) 
we can associate a relative cup-length as follows: if $(N,L)$ is an index pair for $S:=\text{inv}\, (\Omega,\phi)$, we set 
$$\mathcal Y(\Omega,\phi) := \mathcal Y(N,L;\Omega).$$
Lemma 3.3 in \cite{KGUss} shows that such a definition is well-posed, namely independent of the choice of the index pair $(N,L)$, and that the relative cup-length is invariant under continuation. 

The importance of $\mathcal Y(\Omega,\phi)$ relies on the next theorem which allows us to estimate from below the number of elements in a Morse decomposition of the invariant set $S$. In case of gradient-like 
vector fields (which is the case we are interested in) this also provides a lower bound on the number of rest points.
Recall that a \textit{Morse decomposition} of $S$ is a finite collection $M_1,...,M_k$ of disjoint compact invariant subsets of $S$ which can be ordered in such a way that the following holds: 
if
$$x\in S \setminus \bigcup_{i=1}^k M_i,$$
then there exist $i<j$ such that $\omega(x)\subset M_i$ and $\alpha (x)\subset M_j$, where as usual $\omega(x)$ resp. $\alpha(x)$ denotes the $\omega$-limit set resp. the $\alpha$-limit set of $x$. 
The following theorem is stated in \cite{KGUss} in a slightly weaker form (see \cite[Theorem 4.1]{KGUss}). However, the proof actually shows the slightly stronger version we state here and for this reason will be omitted. 

\begin{thm}
\label{thm:1}
Let $X$ be a locally compact metric space, $\Omega \subset X$ be an isolating neighborhood for the gradient-like flow $\phi$, $S:=\mathrm{inv}\, (\Omega,\phi)$. If $M_1,...,M_k$ is a Morse decomposition of $S$ 
such that the $M_i$'s admit pairwise disjoint and contractible isolating neighborhoods $\Omega_i$ with $M_i=\mathrm{inv}\, (\Omega_i,\phi)$, then 
\begin{equation*}
\hspace{63mm} k\geq \mathcal Y(\Omega,\eta). \, \hspace{63mm} \qed
\end{equation*}
\end{thm}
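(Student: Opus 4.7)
The plan is to combine the existence of a filtered index pair for a Morse decomposition with the standard Lyusternik--Schnirelmann style filtration-of-cohomology argument, adapted to the setting of the $H^*(\Omega)$-module structure on $H^*(N,L)$.

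First I would invoke the classical construction (due to Conley and Franzosa) of a \emph{filtered index pair} adapted to $M_1,\dots,M_k$: there exist compact sets
\[
N_0 \subset N_1 \subset \cdots \subset N_k
\]
such that $(N_k,N_0)$ is an index pair for $S$, and for every $j=1,\dots,k$ the pair $(N_j,N_{j-1})$ is an index pair for $M_j$. Moreover, using the fact that each $M_j$ admits the contractible isolating neighborhood $\Omega_j$ and that the $\Omega_j$'s are pairwise disjoint, this construction can be refined so that the ``block'' $B_j := \overline{N_j \setminus N_{j-1}}$ of the filtration sits inside $\Omega_j$. Hence by excision,
\[
H^*(N_j,N_{j-1}) \;\cong\; H^*(B_j,\, B_j \cap N_{j-1}),
\]
with $B_j \subset \Omega_j$. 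Writing $(N,L):=(N_k,N_0)$ and $\iota\colon N\hookrightarrow\Omega$ for the inclusion, I would then define the decreasing filtration
\[
F^j \;:=\; \mathrm{Im}\bigl(H^*(N,N_j) \longrightarrow H^*(N,L)\bigr), \qquad j=0,1,\dots,k,
\]
so that $F^0 = H^*(N,L)$ and $F^k = 0$.

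The heart of the argument is the following claim: for every $\beta \in H^{>0}(\Omega)$ one has
\[
\beta \cdot F^{j} \;\subset\; F^{j+1}, \qquad j=0,\dots,k-1.
\]
To prove it, pick $\alpha \in F^j$ and lift it to $\tilde\alpha\in H^*(N,N_j)$. By naturality of the cup product, it suffices to show that $\iota^*\beta \cup \tilde\alpha \in H^*(N,N_j)$ vanishes when further restricted to $H^*(N_{j+1},N_j)$, because the long exact sequence of the triple $(N,N_{j+1},N_j)$ then produces a lift to $H^*(N,N_{j+1})$. Under the excision isomorphism $H^*(N_{j+1},N_j)\cong H^*(B_{j+1},B_{j+1}\cap N_j)$, this restriction factors through the restriction along $B_{j+1}\hookrightarrow \Omega_{j+1}\hookrightarrow\Omega$. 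Since $\Omega_{j+1}$ is contractible, $H^{>0}(\Omega_{j+1})=0$, so $\beta$ pulls back to zero in $H^*(B_{j+1})$, and therefore the cup product in question is zero. This establishes the claim.

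Finally I would iterate the claim. Suppose that $\mathcal{Y}(\Omega,\phi)=m$. If $m\leq 1$ the conclusion $k\geq m$ is trivial, so assume $m\geq 2$ and pick $\alpha_0\in H^*(N,L)$ and $\beta_1,\dots,\beta_{m-1}\in H^{>0}(\Omega)$ realizing the cup-length, i.e.
\[
(\beta_1\cup\cdots\cup\beta_{m-1})\cdot \alpha_0 \;\neq\; 0.
\]
Starting from $\alpha_0 \in F^0$ and applying the claim $m-1$ times gives
\[
(\beta_1\cup\cdots\cup\beta_{m-1})\cdot \alpha_0 \;\in\; F^{m-1}.
\]
If we had $k<m$, i.e. $m-1\geq k$, then $F^{m-1}\subset F^k=0$, a contradiction. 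Therefore $k\geq m = \mathcal{Y}(\Omega,\phi)$.

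The main technical obstacle I would expect is the first step: arranging the filtered index pair so that each block $B_{j+1}$ lies inside the prescribed neighborhood $\Omega_{j+1}$, and making the excision identifications compatible with the $H^*(\Omega)$-module structure. Everything else is a standard, purely cohomological filtration argument once that geometric input is in place; the contractibility of the $\Omega_j$'s feeds in exactly at the one spot where it is needed, namely to force $\iota^*\beta|_{B_{j+1}}$ to vanish.
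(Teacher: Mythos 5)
Your overall strategy---a filtered index pair for the Morse decomposition followed by a Lyusternik--Schnirelmann filtration-of-cohomology argument---is indeed the right framework, and is essentially the approach in the reference \cite{KGUss} to which the paper defers. However, there is a genuine gap in the first step, and it is not merely the ``technical obstacle'' you flag; the claim as stated is false.

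You assert that the index filtration $N_0\subset N_1\subset\cdots\subset N_k$ can be refined so that every block $B_j:=\overline{N_j\setminus N_{j-1}}$ lies inside $\Omega_j$. This is impossible whenever the Morse decomposition has connecting orbits. The blocks satisfy $\bigcup_j B_j\supset N_k\setminus N_0=N\setminus L\supset S$, and $S$ contains all heteroclinic orbits between the $M_i$'s; but a heteroclinic from $M_j$ to $M_i$ leaves $\Omega_j$, traverses the complement of $\bigcup_i\Omega_i$, and only later enters $\Omega_i$, since the $\Omega_i$'s are pairwise disjoint. Hence $\bigcup_j\Omega_j$ cannot cover $S$, and at least one block must escape its prescribed neighborhood. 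This already happens in the simplest example: for the gradient-like flow on $S^1$ with one source $p$ and one sink $q$, any index filtration with $N_0=\varnothing$, $N_2=S^1$ forces $B_1\cup B_2=S^1$, so not both $B_1\subset\Omega_1$ and $B_2\subset\Omega_2$ can hold if $\Omega_1,\Omega_2$ are small disjoint arcs. In that example the filtration shift $\beta\cdot F^0\subset F^1$ happens to hold anyway, but because $N_1$ is itself contractible---a coincidence, not a consequence of your excision argument.

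The filtration shift $\beta\cdot F^j\subset F^{j+1}$ is still correct, but the reason is subtler than what you offer: one must show that the $H^*(\Omega)$-action on the graded piece $H^*(N_{j+1},N_j)\cong CH^*(M_{j+1})$ factors through $H^*(\Omega_{j+1})$ for \emph{some} (equivalently any) index pair computing the Conley index of $M_{j+1}$, not just for one that happens to lie inside $\Omega_{j+1}$. That requires comparing $(N_{j+1},N_j)$ with an index pair contained in $\Omega_{j+1}$ via flow-defined ``squeezing'' maps and verifying that these are compatible with the $H^*(\Omega)$-module structure. This compatibility is exactly the content of the well-definedness result (Lemma 3.3 in \cite{KGUss}) that the paper cites, and it is the nontrivial input your proof elides. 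Without it, the excision step of your key claim has no justification.
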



The following properties of the relative cup-length will be useful later on. Their easy proof is left to the reader. 

\begin{lm}
\label{lem:3}
Let $X$ be a locally compact metric space, $\Omega \subset X$ be an isolating neighborhood for the flow $\phi$, $S:=\mathrm{inv}\, (\Omega,\phi)$. Then the following statements hold:
\begin{enumerate}
\item If $\Omega'\subset \Omega$ is another isolating neighborhood for $S$, then
$$\mathcal Y(\Omega',\phi)\geq \mathcal Y(\Omega,\phi).$$
\item If $\Omega'\subset \Omega$ is another 
isolating neighborhood for $S$ and the inclusion $\iota:\Omega'\to \Omega$ induces a surjective homomorphism in cohomology, then 
$$\mathcal Y(\Omega',\phi)= \mathcal Y(\Omega,\phi).$$
\item If $\eta=(\eta_1,\eta_2)$ is a product flow, $\Omega=\Omega_1\times \Omega_2$, then 
\begin{equation*} \hspace{37mm} \mathcal Y(\Omega,\phi) = \mathcal Y(\Omega_1,\eta_1)  \cdot \mathcal Y(\Omega_2,\eta_2). \hspace{37mm} \qed \end{equation*}
\end{enumerate}
\end{lm}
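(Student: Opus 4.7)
The plan is to dispatch (1) and (2) as naturality arguments about the cup product using a common index pair, and to derive (3) from the Künneth formula applied to a product index pair.

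For (1), since $\mathrm{inv}(\Omega',\phi)=\mathrm{inv}(\Omega,\phi)=S$, I pick an index pair $(N,L)$ for $S$ with $N\subset \mathrm{Int}(\Omega')$; such an $(N,L)$ is then an index pair simultaneously inside $\Omega'$ and inside $\Omega$. Writing $j:\Omega'\hookrightarrow \Omega$ and $\iota_{\Omega'}:N\to \Omega'$, $\iota_\Omega:N\to \Omega$ for the inclusions, the factorisation $\iota_\Omega=j\circ \iota_{\Omega'}$ yields $\beta\cdot \alpha = (j^*\beta)\cdot \alpha$ for every $\beta\in H^*(\Omega)$ and $\alpha\in H^*(N,L)$. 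Consequently, any family of classes in $H^{>0}(\Omega)$ witnessing $\mathcal Y(\Omega,\phi)\geq k$ pulls back via $j^*$ to a family in $H^{>0}(\Omega')$ realising the same non-trivial action on the same $\alpha_0$, giving $\mathcal Y(\Omega',\phi)\geq \mathcal Y(\Omega,\phi)$.

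For (2), the forward inequality is (1); for the reverse, one uses the surjectivity of $j^*$. Since $j^*$ respects the grading, it restricts to a surjection $H^{>0}(\Omega)\to H^{>0}(\Omega')$, allowing me to lift any family $\beta_1',\dots,\beta_{k-1}'\in H^{>0}(\Omega')$ to classes $\beta_1,\dots,\beta_{k-1}\in H^{>0}(\Omega)$ with $j^*\beta_i=\beta_i'$. The same identity used in (1) then propagates to products, giving $(\beta_1\cup \dots \cup \beta_{k-1})\cdot \alpha_0 = (\beta_1'\cup \dots \cup \beta_{k-1}')\cdot \alpha_0 \neq 0$ and hence $\mathcal Y(\Omega,\phi)\geq \mathcal Y(\Omega',\phi)$.

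For (3), I take index pairs $(N_i,L_i)$ for $S_i:=\mathrm{inv}(\Omega_i,\eta_i)$ inside $\Omega_i$ and form the product index pair $(N,L):=\big(N_1\times N_2,\, (L_1\times N_2)\cup (N_1\times L_2)\big)$ for $S_1\times S_2$ inside $\Omega$. With field coefficients, the Künneth formula yields
$$H^*(N,L)\cong H^*(N_1,L_1)\otimes H^*(N_2,L_2),\qquad H^*(\Omega)\cong H^*(\Omega_1)\otimes H^*(\Omega_2),$$
and these isomorphisms carry the $H^*(\Omega)$-module structure on $H^*(N,L)$ to the tensor product of the two factorwise module structures. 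The desired identity then becomes a purely algebraic statement: the lower bound follows by combining the witnessing classes of each factor into tensor products, while the matching upper bound is obtained by expanding an arbitrary element of $H^{>0}(\Omega)$ as a sum of elementary tensors and applying the bounds coming from each $\mathcal Y(\Omega_i,\eta_i)$. The main obstacle lies precisely in this last step, since cohomology classes in $H^*(\Omega)$ need not be elementary tensors, forcing one to keep careful track of the degrees and of the mixed contributions; by contrast, once a common index pair has been fixed, (1) and (2) are essentially immediate from naturality.
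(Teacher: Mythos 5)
Your proofs of items (1) and (2) are correct, and they use exactly the natural mechanism: since $\mathrm{inv}(\Omega')=\mathrm{inv}(\Omega)=S$, one can choose a single index pair $(N,L)$ with $N\subset\mathrm{Int}\,\Omega'$, so that the module structures over $H^*(\Omega')$ and $H^*(\Omega)$ are related via $\beta\cdot\alpha=(\iota^*\beta)\cdot\alpha$. Pulling back a witnessing family gives (1); lifting a witnessing family along a surjective, degree-preserving $\iota^*$ gives the converse inequality in (2). There is nothing to add there.

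Item (3) is where the proposal has a real gap, and in fact the gap exposes an inconsistency you should have noticed. The lower bound you sketch, tensoring the two witnessing families, produces the $(\mathcal Y_1-1)+(\mathcal Y_2-1)$ classes $\beta_i^{(1)}\otimes 1$ and $1\otimes\beta_j^{(2)}$ whose total cup product acts nontrivially on $\alpha_0^{(1)}\otimes\alpha_0^{(2)}$. This yields $\mathcal Y(\Omega,\phi)\geq\mathcal Y_1+\mathcal Y_2-1$, \emph{not} $\mathcal Y_1\cdot\mathcal Y_2$, and the two quantities differ whenever both $\mathcal Y_i\geq 2$ (for the trivial flows on $S^2$ one has $\mathcal Y_i=2$ but $\mathcal Y(S^2\times S^2)=3\neq 4$). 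So the argument as written does not prove the displayed formula, and you should have flagged that the product formula as stated can only be reconciled with your construction when $\min(\mathcal Y_1,\mathcal Y_2)\leq 1$, which is exactly the regime in which the paper later invokes it (one factor is a ball, with $\mathcal Y=1$). The upper bound is not merely ``an obstacle'' to be acknowledged; it requires an argument, and the argument you gesture at needs the following combinatorial point: write $\gamma_i=\sum_k a_{ik}\otimes b_{ik}$ with $\deg a_{ik}+\deg b_{ik}>0$, expand the $m$-fold cup product, and in each elementary-tensor term count $|P_1|=\#\{i:\deg a_{ik_i}>0\}$ and $|P_2|=\#\{i:\deg b_{ik_i}>0\}$; since $P_1\cup P_2=\{1,\dots,m\}$ one has $|P_1|+|P_2|\geq m$, while the vanishing of $(\mathcal Y_j)$-fold products in each factor kills the term as soon as $|P_1|\geq\mathcal Y_1$ or $|P_2|\geq\mathcal Y_2$. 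Hence every term vanishes once $m\geq(\mathcal Y_1-1)+(\mathcal Y_2-1)+1$, giving $\mathcal Y(\Omega,\phi)\leq\mathcal Y_1+\mathcal Y_2-1$ (together with the separate observation that $\mathcal Y(\Omega,\phi)=0$ whenever one of the factors is trivial by the K\"unneth theorem). You need to supply this counting argument for the upper bound; without it the proof of (3) is incomplete.
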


Even if the Hilbert space $E$ is not locally compact, in case of an $\mathcal L \mathcal S$-flow we can still use the relative cup-length of the Conley index and Theorem \ref{thm:1} by replacing $\eta$ with 
a suitably large (in the sense of Lemmas \ref{lem:1} and \ref{lem:2}) finite dimensional approximation $\eta^{-k,l}$. Indeed, as the next lemma states, a Morse decomposition of an isolated invariant set $S$ 
for $\eta$ always yields a Morse decomposition for the finite dimensional approximation $\eta^{-k,l}$. For the proof we refer to \cite[Theorem 4.2]{IzydorekJDE}. 

\begin{lm}
\label{lem:4}
Let $\eta$ be an $\mathcal L\mathcal S$-flow on $E$ and let $\Omega$ be an isolating neighborhood with $\mathrm{inv}\, (\Omega,\eta)=:S$. Suppose that $M_1,...,M_r$ is a Morse decomposition of $S$
such that the $M_i$'s admit pairwise disjoint isolating neighborhoods $\Omega_i$ with $M_i=\mathrm{inv}(\Omega_i,\eta)$. Then there exists $n_0\in \N$ such that if $k,l>n_0$ then $\Omega^{-k,l},\Omega_i^{-k,l}$ 
are isolating neighborhoods for $\eta^{-k,l}$ and $\mathrm{inv}(\Omega_1^{-k,l},\eta^{-k,l}),..., \mathrm{inv}(\Omega_r^{-k,l},\eta^{-k,l})$ is a Morse decomposition for $\mathrm{inv}(\Omega^{-k,l},\eta^{-k,l})$. \qed
\end{lm}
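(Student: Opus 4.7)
The plan is to reduce the claim to Lemma \ref{lem:1} plus a compactness argument that upgrades bad orbits in the approximations $\eta^{-k,l}$ to a genuine $\eta$-orbit in $\Omega$. First, since $M_i\subset S\subset\mathrm{Int}\,\Omega$, I may shrink each $\Omega_i$ so that $\Omega_i\subset\Omega$ while still having $M_i=\mathrm{inv}(\Omega_i,\eta)$. Applying Lemma \ref{lem:1} to the finite family $\{\Omega,\Omega_1,\ldots,\Omega_r\}$ and taking the maximum of the resulting thresholds produces an $n_0$ such that, for every $k,l>n_0$, all of $\Omega^{-k,l}$ and the $\Omega_i^{-k,l}$ are isolating neighborhoods for $\eta^{-k,l}$ and the $\Omega_i^{-k,l}$ remain pairwise disjoint. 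Setting $S^{-k,l}:=\mathrm{inv}(\Omega^{-k,l},\eta^{-k,l})$ and $M_i^{-k,l}:=\mathrm{inv}(\Omega_i^{-k,l},\eta^{-k,l})$, the inclusions $M_i^{-k,l}\subset S^{-k,l}$ are automatic.

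It remains to inherit the Morse decomposition ordering from $S$. I would argue by contradiction: if it failed for arbitrarily large parameters, there would exist $(k_n,l_n)\to\infty$ and points $x_n\in S^{-k_n,l_n}\setminus\bigcup_i M_i^{-k_n,l_n}$ whose full orbits $\gamma_n(t):=\eta^{-k_n,l_n}(t,x_n)$ lie in $\Omega$ but whose $\omega$- and $\alpha$-limits in the approximation either fail to be contained in some $M_i^{-k_n,l_n}$ or violate the admissible ordering. The main obstacle, and the heart of the proof, is a limiting argument that extracts a genuine $\eta$-orbit from the $\gamma_n$. Using the Duhamel formula
\[ \gamma_n(t)=e^{tL}x_n+\int_0^t e^{(t-s)L}\,\pi^{-k_n,l_n}K(\gamma_n(s))\,ds, \]
the hypothesis that $K$ sends bounded sets to relatively compact ones, together with $\|\pi^{-k,l}\|\le 1$ and $\pi^{-k,l}\to\mathrm{Id}$ strongly, ensures that the integral term takes values in a fixed relatively compact subset of $E$, while uniform boundedness of $L+K$ on $\Omega$ gives uniform equicontinuity of the $\gamma_n$. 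A standard Arzel\`a--Ascoli plus diagonal-extraction argument then produces a subsequence converging, uniformly on compact time intervals, to a full $\eta$-orbit $\gamma$ contained in $\Omega$.

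The contradiction closes with the Morse decomposition of $S$ applied to $\gamma$. Either $\gamma(0)\in M_i$ for some $i$, in which case uniform convergence together with the fact that $\Omega_i$ isolates $M_i$ forces $x_n\in\Omega_i^{-k_n,l_n}$ and hence $x_n\in M_i^{-k_n,l_n}$ for $n$ large, contradicting $x_n\notin\bigcup_i M_i^{-k_n,l_n}$; or $\omega(\gamma(0))\subset M_i$ and $\alpha(\gamma(0))\subset M_j$ with $i<j$, in which case the same uniform convergence, combined with the isolation of $M_i$ and $M_j$ by their respective $\Omega_i,\Omega_j$, transfers the correct ordering to $\gamma_n$ for $n$ large, again contradicting the choice of $x_n$. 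This establishes the ordering property of $M_1^{-k,l},\ldots,M_r^{-k,l}$ in $S^{-k,l}$ for all $k,l>n_0$ and completes the proof.
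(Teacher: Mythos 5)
The paper does not actually prove this lemma in-text; it is deferred to \cite[Theorem~4.2]{IzydorekJDE}, so there is no internal argument to compare against. Evaluating your proposal on its own terms: the reduction to Lemma~\ref{lem:1} is fine, but the compactness step and the final contradiction both have gaps. The Duhamel formula as you wrote it does not give relative compactness: the term $e^{tL}x_n$ is merely bounded by $e^{|t|}\|x_n\|$, and $\{x_n\}$ is just a bounded sequence in an infinite-dimensional Hilbert space, so the curves $\gamma_n$ are equicontinuous but need not take values in a compact set, and Arzel\`a--Ascoli cannot be applied as stated. To obtain compactness one must exploit that $\gamma_n$ is a \emph{full bounded} orbit and use the associated integral representation
\[
\gamma_n^+(t)=-\int_t^{+\infty}e^{t-s}\,\bigl(\pi^{-k_n,l_n}K(\gamma_n(s))\bigr)^+\, \mathrm d s,
\qquad
\gamma_n^-(t)=\int_{-\infty}^{t}e^{-(t-s)}\,\bigl(\pi^{-k_n,l_n}K(\gamma_n(s))\bigr)^-\, \mathrm d s,
\]
which places $\gamma_n^\pm(t)$ in a fixed compact set because $K$ is compact, while $\gamma_n^0(t)$ ranges over the finite-dimensional $E^0$. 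This is the step where boundedness of the whole orbit (not just on compact time intervals) is genuinely needed.

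More seriously, the concluding contradiction does not follow. After extracting $\gamma_n\to\gamma$ uniformly on compact intervals, you apply the Morse decomposition of $S$ to the limit $\gamma$ and claim that its $\omega/\alpha$-behaviour ``transfers to $\gamma_n$ for $n$ large.'' Locally uniform convergence cannot give this: the $\omega$- and $\alpha$-limit sets of $\gamma_n$ are determined by the tails $t\to\pm\infty$, which are invisible to convergence on any fixed compact interval; $\gamma_n$ may shadow $\gamma$ on $[-T,T]$ and then drift to an entirely different Morse set for $|t|>T$. The first branch of your case split also has a local error: $x_n\in\Omega_i^{-k_n,l_n}$ does not imply $x_n\in M_i^{-k_n,l_n}=\mathrm{inv}(\Omega_i^{-k_n,l_n},\eta^{-k_n,l_n})$, since membership in the isolating neighbourhood says nothing about the full orbit staying there. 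The proof in \cite{IzydorekJDE} sidesteps both problems by persisting the attractor filtration $\emptyset=A_0\subset A_1\subset\dots\subset A_r=S$ attached to the Morse decomposition: one fixes attractor (isolating) blocks for the $A_i$, shows via the compactness of $K$ (packaged as in Lemma~\ref{lem:1}) that these remain attractor blocks for $\eta^{-k,l}$, and reads off the Morse decomposition of $\mathrm{inv}(\Omega^{-k,l},\eta^{-k,l})$ from the induced filtration; tail behaviour of individual orbits never needs to be controlled. If you want to salvage a contradiction argument, you would have to pass to limit-set points $y_n\in\omega(\gamma_n)\setminus\bigcup_i M_i^{-k_n,l_n}$ and run the compactness argument on those, rather than on the $x_n$ themselves, but even then the ordering property requires the attractor structure.
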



\subsection{Hamiltonian systems on $\C\PP^n$} Consider a time-depending one-periodic Hamiltonian function  $H_0:\T\times \C\PP^n\to \R$, where $\C\PP^n$ is as usual equipped with the Fubini-Study symplectic form. The Arnold conjecture on $\C\PP^n$ (see \cite{Fo}) states that the number of one-periodic solutions to 
\begin{equation}
\dot x(t) = J \nabla H_0(t,x(t))
\label{eq:Hamiltoncpn}
\end{equation}
is bounded from below by $n+1$, namely the cup-length (and, actually, also the sum of Betti numbers) of $\C\PP^n$. Hereafter, for sake of simplicity we will refer to one-periodic solutions to \eqref{eq:Hamiltoncpn} simply as \textit{solutions}. Solutions to \eqref{eq:Hamiltoncpn} are more easily studied by lifting the Hamiltonian system to $\R^{2n+2}$ as we now recall (for more details we refer to \cite{Ab,Fo}). We lift $H_0$ to a one-periodic function on $S^{2n+1}$, denoted $H_1$, and then extend $H_1$ to $\R^{2n+2}$ quadratically
$$H:\T\times \R^{2n+2} \to \R,\qquad  H(t,x) := |x|^2 \cdot H_1 \Big (\frac{x}{|x|}, t\Big ),$$
where here $|\cdot|$ denotes the euclidean norm. Notice that, being $H$ 2-homogeneous resp. $S^1$-invariant, its gradient is 1-homogeneous resp.  $S^1$-equivariant. 
By construction, solutions to 
\begin{equation}
\dot x(t) = J \nabla H(t,x(t))
\label{eq:Hamiltonrn}
\end{equation}
descend to solutions to \eqref{eq:Hamiltoncpn}: given a solution $x$ to \eqref{eq:Hamiltonrn}, one easily sees that $x/\|x\|_2\subset S^{2n+1}$ is again a solution, and composing with the projection $S^{2n+1}\to \C\PP^n$ yields the desired solution to \eqref{eq:Hamiltoncpn}. In contrast, solutions to \eqref{eq:Hamiltoncpn} need not lift to solutions of \eqref{eq:Hamiltonrn}; indeed, they in general lift to paths which solve \eqref{eq:Hamiltonrn} but a priori only start and end at the same $S^1$-fibre of the Hopf fibration. 
To detect such more general solutions to \eqref{eq:Hamiltonrn} we consider for arbitrary $\lambda \in \R$ the  solutions to 
\begin{equation}
\dot y(t) = J \big ( \nabla  H(t,y(t)) - 2 \pi \lambda y(t)\big ).
\label{eq:Hamiltonlambda}
\end{equation}
Indeed, a straightforward computation shows that $y$ is a solution to \eqref{eq:Hamiltonlambda} if and only if 
$$x(t) := e^{2\pi \lambda Jt} y(t)$$ 
solves \eqref{eq:Hamiltonrn} and satisfies $x(1)= e^{2\pi \lambda J}x(0)$. Notice finally that the correspondence between solutions to \eqref{eq:Hamiltoncpn} and solutions to \eqref{eq:Hamiltonlambda} is not one-to-one: each solution to \eqref{eq:Hamiltoncpn} lifts to a $\C\times \Z$-family of solutions to \eqref{eq:Hamiltonlambda} in 
the following way: for a given solution $\bar x$ to \eqref{eq:Hamiltoncpn} pick a lift $x$ to $\R^{2n+2}$. By construction, $x$ satisfies \eqref{eq:Hamiltonlambda} for some $\lambda \in \R$; then
for any $\theta \in \R, c\in \R, k\in \Z$, the curve
$$t \mapsto c\cdot e^{2\pi J(\theta + kt)}x(t)$$
satisfies \eqref{eq:Hamiltonlambda} for $\tilde \lambda = \lambda + k$ and projects to $\bar x$. Conversely, each solution of \eqref{eq:Hamiltonlambda} projecting to $\bar x$ must be of this form.  
Since the ``scaling-factor'' $c$ is easily ruled out by imposing that the $L^2$-norm of the solution is one, i.e. $\|x\|_2=1$,  in order to find $n+1$ distinct solutions to \eqref{eq:Hamiltoncpn} we 
need to show that there exist $n+1$ distinct $S^1$-families of  solutions to 
\begin{equation}
\label{eq:Hamiltonlambdasphere} 
\left \{\begin{array}{l} \dot x(t) = J \big ( \nabla H(t,x(t)) - 2 \pi \lambda x(t)\big ), \\ \|x\|_2 =1,\end{array}\right .
\end{equation}
such that the corresponding values of $\lambda$ are all contained in an interval of the form $[\lambda_0,\lambda_0+1)$. 
Actually, it is not restrictive to assume further that all $\lambda$ shall lie in the open interval 
$(\lambda_0,\lambda_0+1)$: indeed, if \eqref{eq:Hamiltonlambdasphere} had solutions for every $\lambda$, then we would immediately have the 
existence of infinitely many solutions to \eqref{eq:Hamiltoncpn}. Notice that \eqref{eq:Hamiltonlambdasphere} has no  solutions if and only if \eqref{eq:Hamiltonlambda} has no non-zero solutions.

The nice feature of  solutions to \eqref{eq:Hamiltonlambdasphere} is that they admit a variational characterization as critical points of a suitable modification of the Hamiltonian 
action functional as we now show.
We start by setting 
\begin{equation}
F_\lambda (x) := - J \dot x - \nabla  H(x) + 2\pi \lambda x, \qquad \forall \lambda\in \R,
\label{eq:Flambda}
\end{equation}
and choose $\lambda_0\in \R$ such that \eqref{eq:Hamiltonlambdasphere} has no solutions for $\lambda=\lambda_0$, or, equivalently, such that $F_{\lambda_0}(x)=0$ implies $x=0$.  In what follows we denote by 
$$\jmath^*:L^2(\T,\R^{2n+2})\to H^{1/2}(\T,\R^{2n+2})=:E$$
the adjoint operator to the canonical inclusion, and with $\|\cdot\|$ the $H^{1/2}$-norm. Recall that 
$$\jmath^* \Big (- J \frac{\mathrm d}{\mathrm d t} \Big ) x = x^+ - x^-=Lx,$$
where $L$ is the operator defined in \eqref{L} and $x=x^++x^-+x^0$ is the canonical decomposition of $x$.

\begin{lm}
\label{lem:5}
There exists $\epsilon>0$ such that 
$$\inf_{\|x\|=1} \|\jmath^* F_\lambda (x) \| >\epsilon, \quad \forall \lambda \in (\lambda_0-\epsilon,\lambda_0+\epsilon).$$
As a corollary, \eqref{eq:Hamiltonlambdasphere} has no solutions (equivalently, \eqref{eq:Hamiltonlambda} has no non-zero solutions) for every $\lambda \in (\lambda_0-\epsilon,\lambda_0+\epsilon)$. 
\end{lm}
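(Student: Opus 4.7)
The plan is to reduce the lemma to a strict positivity statement at $\lambda = \lambda_0$ and then propagate it to a neighborhood by a linear perturbation estimate. First I would rewrite
\[
\jmath^* F_\lambda(x) = Lx - K_\lambda(x), \qquad K_\lambda(x) := \jmath^*\bigl(\nabla H(x) - 2\pi \lambda x\bigr),
\]
and observe that $K_\lambda : E \to E$ is a continuous, $1$-homogeneous, \emph{compact} map. Indeed, $H$ is $2$-homogeneous with $|\nabla H(t,x)| \leq C|x|$, so the Nemitski operator $x \mapsto \nabla H(\cdot,x(\cdot))$ sends bounded sets of $E$ continuously into bounded sets of $L^2(\T,\R^{2n+2})$, while $\jmath^* : L^2 \to E$ is compact as the adjoint of the Rellich embedding $E \hookrightarrow L^2$.

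The core step is to show that $c_0 := \inf_{\|x\|=1} \|\jmath^* F_{\lambda_0}(x)\| > 0$. I would argue by contradiction: assume there exist $x_n \in E$ with $\|x_n\| = 1$ and $Lx_n - K_{\lambda_0}(x_n) \to 0$. After extracting a weakly convergent subsequence $x_n \rightharpoonup x_*$, compactness of $K_{\lambda_0}$ upgrades this to $K_{\lambda_0}(x_n) \to K_{\lambda_0}(x_*)$ strongly, hence $Lx_n$ converges strongly in $E$. Using the splitting $x_n = x_n^+ + x_n^- + x_n^0$ with $L = \mathrm{Id}$ on $E^+$, $L = -\mathrm{Id}$ on $E^-$, and $\dim E^0 < \infty$, strong convergence of $Lx_n = x_n^+ - x_n^-$ forces strong convergence of $x_n^+ + x_n^-$, and finite-dimensionality handles $x_n^0$ along a further subsequence. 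Therefore $x_n \to x_*$ strongly with $\|x_*\| = 1$, and in the limit $\jmath^* F_{\lambda_0}(x_*) = 0$; by a standard bootstrap in $t \in \T$ this is equivalent to $x_*$ being a nonzero classical solution of \eqref{eq:Hamiltonlambda} at $\lambda = \lambda_0$, contradicting the choice of $\lambda_0$.

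Finally I would propagate: from
\[
\jmath^* F_\lambda(x) - \jmath^* F_{\lambda_0}(x) = 2\pi(\lambda - \lambda_0)\,\jmath^* x
\]
and the operator bound $\|\jmath^*\|_{L^2 \to E} \leq C$ one gets $\|\jmath^* F_\lambda(x)\| \geq c_0 - 2\pi C|\lambda - \lambda_0|$ on $\{\|x\|=1\}$, so $\epsilon := \min\{c_0/2,\,c_0/(4\pi C)\}$ delivers the claim. The corollary then follows from $1$-homogeneity of $F_\lambda$: if $y \neq 0$ solved \eqref{eq:Hamiltonlambda}, then $x := y/\|y\|$ would satisfy $\jmath^* F_\lambda(x) = 0$ with $\|x\| = 1$, violating the lower bound. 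I expect the main obstacle to be the compactness upgrade in the middle paragraph: strong convergence of $Lx_n$ does not a priori imply strong convergence of $x_n$ (since $L$ vanishes on the infinite-dimensional weak limit of the $E^0$-component in a general setting), and only the combined use of invertibility of $L$ on $E^+\oplus E^-$ together with $\dim E^0 < \infty$ bridges this gap; without it one would only conclude $x_* \neq 0$, not $\|x_*\| = 1$, which would not contradict the hypothesis on $\lambda_0$.
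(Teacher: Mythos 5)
Your proof is correct and follows essentially the same route as the paper's: argue by contradiction at $\lambda=\lambda_0$, use that $K_{\lambda_0}$ is a compact perturbation of $L$ to upgrade $\jmath^*F_{\lambda_0}(x_n)\to 0$ to strong convergence of $Lx_n$, invoke that $L$ is an isometry on $E^+\oplus E^-$ together with $\dim E^0<\infty$ to get $x_n\to x_\infty$ with $\|x_\infty\|=1$ and $\jmath^*F_{\lambda_0}(x_\infty)=0$, and then propagate by continuity in $\lambda$. Your version is marginally more explicit than the paper's (you first identify the weak limit and give a quantitative bound for the perturbation in $\lambda$, where the paper simply invokes continuity of $\lambda\mapsto\jmath^*F_\lambda$), and your closing remark correctly identifies that the finite dimensionality of $E^0\cong\R^{2n+2}$ is what closes the gap between convergence of $Lx_n$ and convergence of $x_n$.
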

\begin{proof}
By assumption $\jmath^*F_{\lambda_0}(x)\neq 0$ for every $x\neq 0$. Suppose by contradiction that 
$$\inf_{\|x\|=1} \|\jmath^* F_{\lambda_0} (x) \| =0$$
and choose a sequence $\{x_n\}$ such that $\|x_n\|=1$ for all $n\in\N$ and $\jmath^* F_{\lambda_0}(x_n)\to 0$. Since 
$$\jmath^* F_{\lambda_0}(x_n) = Lx_n + \underbrace{\jmath^*\big (- \nabla H(x_n) + 2\pi \lambda_0 x_n\big )}_{=:K_{\lambda_0}(x_n)}$$
where $K_{\lambda_0}$ is a compact operator, we deduce that 
$$Lx_n = x_n^+-x_n^-$$ 
converges up to a subsequence. This implies that $x_n^+,x_n^-$ converge up to a subsequence. As $x_n^0$ lives in a finite dimensional space, up to extracting a further subsequence we also
have that $x_n^0$ converges. Summing up, $x_n$ converges up to a subsequence to some $x_\infty$ such that $\|x_\infty\|=1$ and $\jmath^* F_{\lambda_0}(x_\infty)=0$, clearly a contradiction. 
The claim follows now by the continuity of $\lambda\mapsto \jmath^* F_\lambda$.
\end{proof}

In order to detect only solutions whose corresponding value of $\lambda$ lie in $(\lambda_0,\lambda_0+1)$ as critical point of the Hamiltonian action we proceed as follows: for $\epsilon>0$ as 
in Lemma~\ref{lem:5} we consider a smooth function $\chi=\chi_\epsilon : \R\to \R$ satisfying the following properties:
\begin{enumerate}
\item[($\chi.0$)] $\chi(\lambda_0)=\lambda_0$, $\chi(\lambda_0+1)=\lambda_0+1$. 
\item[($\chi.1$)] $\chi(\lambda) = \lambda$ on $(\lambda_0+\epsilon,\lambda_0+1-\epsilon)$. 
\item[($\chi.2$)] $\chi'>0$ on $(\lambda_0,\lambda_0+1)$
\item[($\chi.3$)] $\chi' \equiv 0, \ \text{on} (-\infty,\lambda_0]\cup [\lambda_0+1,+\infty)$.
\end{enumerate}

We now define 
$$\A_H:E\times \R \to \R,\qquad \A_H(x,\lambda) := \frac 12 \langle -J\dot x,x\rangle_{2} - \int_0^1 H(t,x(t))\, \mathrm d t + \ \pi \big (\chi(\lambda ) \|x\|_2^2 - \lambda \big ).$$
It is straightforward to check that the gradient of $\A_H$ with respect to the product metric on $E\times \R$ is given by 
$$\nabla \A_H(x,\lambda) = \Big ( \jmath^*\big(-J\dot x - \nabla H(x) + 2 \pi \chi(\lambda) x\big ), \pi\big (\chi'(\lambda) \|x\|_2^2 - 1\big )\Big ).$$
Moreover, the following holds.
\begin{lm}
$x$ solves \eqref{eq:Hamiltonlambdasphere} for some $\lambda \in (\lambda_0,\lambda_0+1)$ if and only if $(x,\lambda)$ is a critical point of $\A_H$. 
\label{lem:6}
\end{lm}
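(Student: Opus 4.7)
The plan is to read off the two critical-point equations directly from the explicit formula for $\nabla \A_H$ displayed just above: $(x,\lambda)$ is a critical point of $\A_H$ if and only if
$$ \jmath^*\bigl(-J\dot x - \nabla H(x) + 2\pi\chi(\lambda)\, x\bigr) = 0, \qquad \chi'(\lambda)\|x\|_2^2 = 1. $$
From the second equation I immediately read off $\chi'(\lambda) > 0$ and $x \neq 0$; property $(\chi.3)$ then confines $\lambda$ to the open interval $(\lambda_0,\lambda_0+1)$. Since $\jmath^*$ is injective, the first equation is equivalent to the pointwise ODE $\dot x = J\bigl(\nabla H(x) - 2\pi\chi(\lambda) x\bigr)$, so $x$ is a nonzero solution of \eqref{eq:Hamiltonlambda} with parameter $\chi(\lambda)$ rather than $\lambda$ itself.

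The heart of the matter is to promote $\chi(\lambda)$ to $\lambda$, i.e. to confine $\lambda$ to the subinterval on which $\chi$ is the identity. Here I would combine Lemma \ref{lem:5} with its analog at $\lambda_0+1$; the latter follows at once from the $\Z$-symmetry $x(t) \mapsto e^{2\pi J t}\,x(t)$ relating solutions at $\lambda$ to solutions at $\lambda+1$ recalled just before \eqref{eq:Hamiltonlambdasphere}, after shrinking $\epsilon$ if necessary. Hence \eqref{eq:Hamiltonlambda} has no nonzero solutions for parameter in $(\lambda_0-\epsilon,\lambda_0+\epsilon) \cup (\lambda_0+1-\epsilon,\lambda_0+1+\epsilon)$. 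Properties $(\chi.0)$--$(\chi.2)$ guarantee that $\chi$ restricts to a strictly increasing homeomorphism of $[\lambda_0,\lambda_0+\epsilon]$ and of $[\lambda_0+1-\epsilon,\lambda_0+1]$ onto themselves, so if $\lambda$ were to lie in one of the boundary strips $(\lambda_0,\lambda_0+\epsilon)$ or $(\lambda_0+1-\epsilon,\lambda_0+1)$ then $\chi(\lambda)$ would fall into the excluded set and force $x=0$, contradicting $x\ne 0$. Consequently $\lambda \in [\lambda_0+\epsilon,\lambda_0+1-\epsilon]$, and on this interval $(\chi.1)$ gives $\chi(\lambda)=\lambda$ and $\chi'(\lambda)=1$; the $\lambda$-equation thus reduces to $\|x\|_2=1$ and the $x$-equation is exactly \eqref{eq:Hamiltonlambdasphere}.

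For the converse I would simply run the same argument in reverse: given $x$ solving \eqref{eq:Hamiltonlambdasphere} for some $\lambda \in (\lambda_0,\lambda_0+1)$, the constraint $\|x\|_2=1$ ensures $x\neq 0$, so Lemma \ref{lem:5} and its $\lambda_0+1$-analog (now applied directly, without composition with $\chi$) rule out $\lambda \in (\lambda_0,\lambda_0+\epsilon) \cup (\lambda_0+1-\epsilon,\lambda_0+1)$, placing $\lambda$ in the ``linear'' region $[\lambda_0+\epsilon,\lambda_0+1-\epsilon]$ where $\chi(\lambda)=\lambda$ and $\chi'(\lambda)=1$; substitution into the gradient formula makes both equations trivially satisfied. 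The only step I expect to require any genuine thought is the confinement of $\lambda$ to this linear region in the forward direction, and this is precisely what the careful choice of cutoff $\chi$, tailored to the $\epsilon$ produced by Lemma \ref{lem:5}, is designed to make easy; everything else is a mechanical unpacking of the gradient formula.
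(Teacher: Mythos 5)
Your proof is correct and follows essentially the same route as the paper: both use property $(\chi.3)$ together with the $\lambda$-equation to confine $\lambda$ to $(\lambda_0,\lambda_0+1)$ and force $x\ne0$, then invoke Lemma~\ref{lem:5} (and its $\lambda_0+1$-analogue) to push $\lambda$ into the region where $\chi$ is the identity, after which the gradient formula collapses to \eqref{eq:Hamiltonlambdasphere}. You are only somewhat more explicit than the paper (which writes ``in a similar fashion'') in justifying the $\lambda_0+1$-analogue of Lemma~\ref{lem:5} via the shift symmetry, which is a welcome clarification rather than a deviation.
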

\begin{proof}
If $x$ solves \eqref{eq:Hamiltonlambdasphere} for some $\lambda\in (\lambda_0,\lambda_0 + 1)$, then by Lemma~\ref{lem:5} we have that $\lambda \in (\lambda_0+\epsilon,\lambda_0+1-\epsilon)$. Property ($\chi.1$) now implies that 
$$\nabla \A_H(x,\lambda ) = \Big ( \jmath^*\big(-J\dot x - \nabla H(x) + 2 \pi \lambda x\big ), \pi\big ( \|x\|_2^2 - 1\big )\Big ) = (0,0),$$
that is, $(x,\lambda)$ is a critical point of $\A_H$. 
Conversely, let $(x,\lambda)$ be a critical point of $\A_H$. By Property ($\chi.3$) we have that $\lambda \in (\lambda_0,\lambda_0+1)$. If $\lambda\in (\lambda_0,\lambda_0+\epsilon)$, then using 
($\chi.0$)-($\chi.2$) we obtain that $\chi(\lambda)= \bar \lambda \in (\lambda_0,\lambda_0+\epsilon)$ and hence the first component of $\nabla \A_H(x,\lambda)$ is given by $\jmath^* F_{\bar \lambda}(x)$,
which we know by Lemma~\ref{lem:5} cannot vanish, a contradiction. In a similar fashion one excludes the case $\lambda\in (\lambda_0+1-\epsilon,\lambda_0+1)$. Therefore, $\lambda \in (\lambda_0+\epsilon,\lambda_0+1-\epsilon)$ and the claim follows by ($\chi.1$). 
\end{proof}

Using the functional $\A_H$ we can give the following equivalent formulation of the Arnold conjecture on $\C\PP^n$: the functional $\A_H$ has at least $n+1$ distinct $S^1$-families of 
critical points. In the next section we will show how to prove such a result using the relative cup-length of the Conley index.


\section{IA-homotopies and the Conley index}
Throughout this section we will consider families of vector fields on $E:=H^{1/2}(\T,\R^{2n+2})$ which are 1-homogeneous and of the form 
\begin{equation}
T = L + K,
\label{eq:admissiblevf}
\end{equation}
where $L$ is as in \eqref{L} and $K:E\to E$ is compact (i.e. maps bounded sets into pre-compact sets) 
and admits a Lipschitz continuous extension to $L^2(\T,\R^{2n+2})$. 
Typically, $K$ is of the form $\jmath^* b$ for some Lipschitz continuous $b:L^2(\T,\R^{2n+2})\to L^2(\T,\R^{2n+2})$.

We will show that - under certain invertibility 
assumptions on the end-points of the family - the Conley index of the maximal 
bounded invariant set for the associated flow depends only on suitable homotopy classes of the end-points.
For notational convenience we will hereafter call vector fields as in \eqref{eq:admissiblevf} \textit{admissible}. We further say that an 
admissible vector field $T$ is \textit{non-vanishing} if $0\in E$ is the only zero of $T$, namely if $T(x)=0$ implies that $x=0$. 

\begin{rem}
For fixed $\lambda\in [\lambda_0,\lambda_0+1]$, the first component of $\nabla \A_H(\cdot, \lambda)$ is admissible. Moreover, the first component of $\nabla \A_H(\cdot,\lambda_0)$ resp.
$\nabla \A_H(\cdot,\lambda_0+1)$ is by assumption non-vanishing. 
\qed
\end{rem}

Let $\mathcal T:[0,1]\times E\times [\lambda_0,\lambda_0+1]\to E$ be a two-parameter-family of admissible vector fields such that $\mathcal T(s,\cdot,\lambda_0)$ and $\mathcal T(s,\cdot,\lambda_0+1)$ are 
invertible for all $s\in [0,1]$. The proof of Lemma~\ref{lem:5} goes through word by word showing that there exists $\epsilon>0$ such that 
\begin{equation}
\label{eq:second}
\inf_{\|x\|=1} \|\mathcal T(s,x,\lambda) \| >\epsilon, \qquad \forall s\in [0,1], \ \forall \lambda \in [\lambda_0,\lambda_0+\epsilon)\cup (\lambda_0+1-\epsilon, \lambda_0+1].
\end{equation}
For such an $\epsilon>0$ we choose a smooth function $\chi:\R\to \R$ satisfying ($\chi.0$)-($\chi.3$) and define
\begin{equation}
g(x,\lambda ) := \pi (\chi'(\lambda)\|x\|_2^2 - 1).
\label{eq:glambda}
\end{equation}

\begin{lm}
\label{lem:7}
Let $\{(s_m,x_m,\lambda_m)\}\subset [0,1]\times E\times [\lambda_0,\lambda_0+1]$ be a sequence such that 
$$\{(\mathcal T(s_m,x_m,\lambda_m), g(x_m,\lambda_m))\}\subset E\times \R$$
is bounded. Then $\{x_m\}$ is bounded in $E$. 
\end{lm}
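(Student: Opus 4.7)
The plan is to argue by contradiction. Suppose the claim fails; then, passing to a subsequence, we may assume $\|x_m\|\to\infty$. Set $y_m := x_m/\|x_m\|$, so $\|y_m\|=1$. Since $\mathcal{T}(s,\cdot,\lambda)$ is $1$-homogeneous for every $(s,\lambda)$,
$$\mathcal{T}(s_m,y_m,\lambda_m)=\|x_m\|^{-1}\mathcal{T}(s_m,x_m,\lambda_m)\longrightarrow 0,$$
because the numerators form a bounded sequence by hypothesis. The goal is then to show that $\{y_m\}$ admits a convergent subsequence whose limit $y_\infty$ satisfies $\|y_\infty\|=1$ but is forced to be zero.

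Next I would extract a limit. By compactness of $[0,1]\times[\lambda_0,\lambda_0+1]$, pass to a subsequence so that $s_m\to s_\infty$ and $\lambda_m\to\lambda_\infty$. Writing $\mathcal{T}(s,y,\lambda)=Ly+K(s,y,\lambda)$ with $K$ compact in $y$ (and, as in Lemma~\ref{lem:5}, jointly continuous in the parameters), the sequence $\{K(s_m,y_m,\lambda_m)\}$ is relatively compact, so along a further subsequence it converges. Together with Step~1 this forces $Ly_m=y_m^+-y_m^-$ to converge; since $\{y_m^0\}$ lives in the finite-dimensional space $E^0$ and is bounded, it too converges up to a subsequence. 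Therefore $y_m\to y_\infty$ in $E$, with $\|y_\infty\|=1$, and by continuity $\mathcal{T}(s_\infty,y_\infty,\lambda_\infty)=0$.

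The contradiction now splits according to $\lambda_\infty$. If $\lambda_\infty\in(\lambda_0,\lambda_0+1)$, property $(\chi.2)$ gives $\chi'(\lambda_\infty)>0$, hence $\chi'(\lambda_m)\geq \tfrac{1}{2}\chi'(\lambda_\infty)$ for large $m$; boundedness of $g(x_m,\lambda_m)$ then forces $\|x_m\|_2$ to stay bounded, so $\|y_m\|_2=\|x_m\|_2/\|x_m\|\to 0$. Since $H^{1/2}$-convergence implies $L^2$-convergence, this yields $\|y_\infty\|_2=0$, i.e.\ $y_\infty=0$, contradicting $\|y_\infty\|=1$. If instead $\lambda_\infty\in\{\lambda_0,\lambda_0+1\}$, then for $m$ sufficiently large $\lambda_m$ lies in the boundary strip $[\lambda_0,\lambda_0+\epsilon)\cup(\lambda_0+1-\epsilon,\lambda_0+1]$, and the uniform lower bound \eqref{eq:second} gives $\|\mathcal{T}(s_m,y_m,\lambda_m)\|>\epsilon$, contradicting Step~1.

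The only delicate point is the joint compactness used in Step~2: the family $K(s,\cdot,\lambda)$ must send bounded sequences in $E$ to relatively compact ones even as $(s,\lambda)$ also vary, which is where one invokes the standing assumption that $\mathcal{T}$ is a continuous family of admissible vector fields (so that $K$ can be treated as a single continuous compact map on $[0,1]\times E\times[\lambda_0,\lambda_0+1]$, exactly as in the proof of Lemma~\ref{lem:5}). Once this is granted, the two-case analysis driven by the sign of $\chi'(\lambda_\infty)$ is the conceptual heart of the argument: the interior case uses the $g$-coordinate of the gradient to control the $L^2$-norm, while the boundary case uses the endpoint invertibility encoded in \eqref{eq:second}.
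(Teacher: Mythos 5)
Your proof is correct, but it takes a different route than the paper's. The paper argues directly (not by contradiction): it splits on whether the $\lambda_{m}$'s have a subsequence in the endpoint strips $[\lambda_0,\lambda_0+\epsilon]\cup[\lambda_0+1-\epsilon,\lambda_0+1]$ or eventually lie in the interior $(\lambda_0+\epsilon,\lambda_0+1-\epsilon)$. The endpoint case is essentially the same as yours, modulo rephrasing: homogeneity and \eqref{eq:second} give $C>\|\mathcal T\|>\epsilon\|x_{m_k}\|$, hence a bound on $\|x_{m_k}\|$ directly, whereas you get the same facts as a contradiction with $\mathcal T(s_m,y_m,\lambda_m)\to 0$. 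The genuine divergence is in the interior case. The paper never normalizes nor passes to a limit: from $\chi'(\lambda_{m_k})=1$ and boundedness of $g$ it gets an $L^2$-bound on $x_{m_k}$; it then invokes the Lipschitz extension of $K$ to $L^2$ (built into the definition of admissible vector fields) to bound $\|K(s_{m_k},x_{m_k},\lambda_{m_k})\|$ by a constant, so $Lx_{m_k}=x_{m_k}^+-x_{m_k}^-$ is bounded in $E$; together with the $L^2$-bound on $x_{m_k}^0$ this gives boundedness of $x_{m_k}$ in $E$. You instead normalize $y_m=x_m/\|x_m\|$, use the compactness of $K$ to extract a convergent limit $y_\infty$ exactly as in the proof of Lemma~\ref{lem:5}, and obtain the contradiction $\|y_\infty\|_2=0$ versus $\|y_\infty\|=1$. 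Both routes are valid. Your approach does require the collective compactness of the family $K(s,\cdot,\lambda)$, which you correctly flag as the delicate point; the paper's choice to use the $L^2$-Lipschitz property instead makes the argument pass to parameterized families with no further comment, which is arguably cleaner in this two-parameter setting.
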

\begin{proof}
If $\{\lambda_{m_k}\}$ has a subsequence which is entirely contained in $[\lambda_0,\lambda_0+\epsilon]\cup [\lambda_0+1-\epsilon,\lambda_0+1]$, then the boundedness of $\mathcal T(s_{m_k},x_{m_k},\lambda_{m_k})$ together with the homogeneity assumption and \eqref{eq:second} implies
$$C > \|\mathcal T(s_{m_k},x_{m_k},\lambda_{m_k})\| = \|x_{m_k} \| \cdot \left \| \mathcal T\left (s_m, \frac{x_{m_k}}{ \|x_{m_k}\|},\lambda_{m_k}\right )\right \| > \epsilon \| x_{m_k}\|$$
and hence that $\|x_{m_k}\|$ is bounded. If instead $\{\lambda_{m_k}\}\subset (\lambda_0+\epsilon,\lambda_0+1-\epsilon)$, then $\chi'(\lambda_{m_k})=1$ for all $k\in \N$, and hence the boundedness
of $g(x_{m_k},\lambda_{m_k})$ is equivalent to the boundedness of $\{x_{m_k}\}$ in $L^2$. Therefore, the particular form of $\mathcal T(s,x,\lambda)$ implies that 
$$C>\|\mathcal T(s_{m_k},x_{m_k},\lambda_{m_k})\| \geq \|Lx_{m_k}\| - \| K(s_{m_k},x_{m_k},\lambda_{m_k})\|\geq \|Lx_{m_k}\| - c,$$
that is $\{Lx_{m_k}\}\subset E$ is bounded. The claim follows, as $Lx_{m_k}=x_{m_k}^+-x_{m_k}^-$, and $\{x_{m_k}\}$ is bounded in $L^2$ (hence, in particular $\{x_{m_k}^0\}$ is bounded in $\R^{2n+2}$).
\end{proof}

\begin{cor}
\label{cor:1}
\textit{Let $\mathcal T:[0,1]\times E\times [\lambda_0,\lambda_0+1]\to E$ be as above. Then there exists a maximal bounded invariant set, that is, there exists $R_0>0$ such that the following holds: $B(R_0)\times [\lambda_0,\lambda_0+1]$ is an isolating neighborhood 
for the flow $\eta_s$ generated by $(\mathcal T_s,g)$, $\mathcal T_s:=\mathcal T(s,\cdot,\cdot)$, for every $s\in [0,1]$, and}
$$\text{inv}\, (\eta_s, B(R)\times [\lambda_0,\lambda_0+1]) = \text{inv}\, (\eta_s, B(R_0)\times [\lambda_0,\lambda_0+1]), \quad \forall R\geq R_0.$$
\end{cor}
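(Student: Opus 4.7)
The plan is to argue by contradiction via Lemma~\ref{lem:7}, reducing the problem to the coercivity provided by that lemma.

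First, the $\lambda$-boundary is automatically excluded. By property $(\chi.3)$, $\chi'(\lambda_0)=\chi'(\lambda_0+1)=0$, so $g(x,\lambda)=-\pi<0$ for every $x\in E$ whenever $\lambda\in\{\lambda_0,\lambda_0+1\}$. The flow $\eta_s$ thus strictly decreases $\lambda$ on both faces $\{\lambda=\lambda_0\}$ and $\{\lambda=\lambda_0+1\}$, so no point of either face can lie on a complete orbit contained in $E\times[\lambda_0,\lambda_0+1]$. Hence $\inv(\eta_s,B(R)\times[\lambda_0,\lambda_0+1])\subset E\times(\lambda_0,\lambda_0+1)$ for every $s\in[0,1]$ and every $R>0$, and the $\lambda$-part of the isolating-neighborhood condition is trivial.

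For the bound in the $E$-direction, suppose no such $R_0$ exists; extract sequences $s_n\in[0,1]$, $R_n\to\infty$ and $(x_n,\lambda_n)\in\inv(\eta_{s_n},B(R_n)\times[\lambda_0,\lambda_0+1])$ with $\|x_n\|\to\infty$, each lying on a complete orbit $\gamma_n$ of $\eta_{s_n}$. Since $\lambda_n(t)\in[\lambda_0,\lambda_0+1]$ for all $t\in\R$, integrating $\dot\lambda=g$ over $[-T,T]$ gives $\int_{-T}^{T}g(\gamma_n(t))\,dt=\lambda_n(T)-\lambda_n(-T)\in[-1,1]$. Combined with the universal lower bound $g\geq -\pi$ (from $\chi'\geq 0$), this forces $|g|$ to be bounded by a universal constant $C_0$ on a set of positive time-density along $\gamma_n$, so that after a time-shift along $\gamma_n$ I may assume $|g(x_n,\lambda_n)|\leq C_0$ while still $\|x_n\|\to\infty$.

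The main obstacle is to simultaneously control $\|\mathcal T(s_n,x_n,\lambda_n)\|$ at these times, so that Lemma~\ref{lem:7} produces the contradiction $\|x_n\|$ bounded. I would exploit the $1$-homogeneity of $\mathcal T$ and rescale by $y_n:=x_n/\|x_n\|$: the rescaled sequence has $\|y_n\|=1$ and satisfies the same equation $\dot y=\mathcal T(s,y,\lambda)$. Compactness of $K$ together with the orthogonal splitting $E=E^+\oplus E^-\oplus E^0$ and the fact that the $E^\pm$-components of any bounded complete orbit can be represented as exponentially weighted integrals of $K$ should allow extracting a strongly convergent subsequence $y_n\to y_\infty$ with $\|y_\infty\|=1$ and $s_n\to s_\infty$, $\lambda_n\to\lambda_\infty$. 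The rescaled form $\dot\lambda_n=\pi(\chi'(\lambda_n)\|x_n\|_2^2-1)$ together with the compact embedding $H^{1/2}\hookrightarrow L^2$ and the uniform bound $|g|\leq C_0$ should force $\chi'(\lambda_n)\to 0$ along the sequence, hence $\lambda_\infty\in\{\lambda_0,\lambda_0+1\}$. At these limiting values the invertibility of $\mathcal T(s,\cdot,\lambda_\infty)$, combined with $1$-homogeneity and the argument of Lemma~\ref{lem:5}, yields $\|\mathcal T(s,y,\lambda_\infty)\|\geq\epsilon\|y\|$ uniformly in $s$, which in the limit of the rescaled $\mathcal T$-equation contradicts the vanishing of $\mathcal T(s_n,y_n,\lambda_n)$ forced by the (assumed) boundedness of $\mathcal T(s_n,x_n,\lambda_n)$. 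The technical heart of the proof is this passage to the limit---in particular the upgrade to strong convergence of $y_n$ via the $\mathcal L\mathcal S$-structure and the concentration of $\lambda_n$ at the boundary from the scaling of $g$.
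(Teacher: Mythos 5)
The paper does not prove this corollary directly: its ``proof'' is a citation to \cite[Prop.~2.4]{St} and \cite[Prop.~2.14]{StWa}, which establish the existence of maximal bounded invariant sets for proper gradient-type $\mathcal L\mathcal S$-vector fields once a coercivity estimate of the type of Lemma~\ref{lem:7} is available. You instead attempt a self-contained argument, and the main ingredients you reach for (the boundary behaviour of $g$ forced by $(\chi.3)$, the $1$-homogeneity of $\mathcal T$, the variation-of-constants representation of the $E^\pm$-components, compactness of $K$, and the uniform non-vanishing \eqref{eq:second} near $\lambda_0,\lambda_0+1$) are precisely the ones such a proof must use. The initial observation that the invariant set avoids the two faces $\{\lambda=\lambda_0\}$, $\{\lambda=\lambda_0+1\}$ because $g\equiv-\pi$ there is correct and cleanly stated.

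However, as written the argument has a genuine gap at its centre, in the passage from ``points of the invariant set'' to ``points where Lemma~\ref{lem:7} applies.'' Lemma~\ref{lem:7} requires boundedness of \emph{both} coordinates $(\mathcal T(s_m,x_m,\lambda_m),g(x_m,\lambda_m))$; the membership of $(x_n,\lambda_n)$ in a bounded complete orbit gives you neither. Your density/time-shift device does produce times $t_n^*$ along $\gamma_n$ where $|g|\le C_0$, but nothing guarantees that $\|x_n(t_n^*)\|\to\infty$: the orbit can dip to small norm precisely at the times where $|g|$ is controlled, after which the contradiction you are aiming for evaporates. Conversely, to carry out the rescaling $y_n=x_n/\|x_n\|$ and extract a \emph{strongly} convergent subsequence, you need the rescaled orbits $\tilde\gamma_n$ to be uniformly bounded (so that $K(\tilde\gamma_n(\cdot))$ stays in a fixed precompact set and the exponentially weighted integrals representing $\tilde\gamma_n^\pm(0)$ converge); this is most naturally achieved by choosing $t_n$ near the time of maximal norm along $\gamma_n$, a choice that is in direct conflict with the requirement $|g(x_n,\lambda_n)|\le C_0$. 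You would need to reconcile these two incompatible normalizations, which is exactly the nontrivial step your sketch leaves open. Finally, in the last paragraph you argue ``the (assumed) boundedness of $\mathcal T(s_n,x_n,\lambda_n)$'' forces $\mathcal T(s_n,y_n,\lambda_n)\to 0$ --- but that boundedness is not available to you; it is what you are trying to establish in order to invoke Lemma~\ref{lem:7}, so this step is circular as phrased. To make the approach work one must instead pass to a limit of the whole rescaled orbit (not just one point), identify the limit as a nontrivial bounded complete orbit of $\mathcal T(s_\infty,\cdot,\lambda_\infty)$ with $\lambda_\infty\in\{\lambda_0,\lambda_0+1\}$, and then rule that out using \eqref{eq:second} together with the $\mathcal L\mathcal S$-structure; this is in spirit what the cited propositions do, but it requires carefully separating the two competing normalizations rather than conflating them at a single time.
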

\begin{proof}
See \cite[Prop. 2.4]{St} or \cite[Prop. 2.14]{StWa}.
\end{proof}

\begin{rem}
For every $s\in [0,1]$, the $\eta_s$-flow line starting at $(0,\lambda)$, for some $\lambda\in \R$, is unbounded as 
$$\big ( \mathcal T_s (0,\lambda), g(0,\lambda) \big ) = (0,-\pi).$$ 
This means that $(0,\lambda)\not \in \text{inv}\, (\eta_s, B(R)\times [\lambda_0,\lambda_0+1])$ for every $\lambda\in \R$, for every $s\in [0,1]$. Since the maximal bounded invariant set is compact (see \cite[Prop. 2.3]{gip}), 
this implies that $\text{inv}\, (\eta_s, B(R)\times [\lambda_0,\lambda_0+1])$ has distance to $\{0\}\times \R\subset E\times \R$ bounded away from zero by some positive constant, say $r_0>0$. 
Therefore, we can replace the isolating neighborhood $B(R_0)\times [\lambda_0,\lambda_0+1]$ by 
$$A(r_0,R_0)\times [\lambda_0,\lambda_0+1],$$
where 
$$A(r_0,R_0):= \big \{ x\in E \ \big |\ r_0\leq \|x\| \leq R_0\big \}$$
is the annulus in $E$ with inner radius $r_0$ and outer radius $R_0$. \qed
\end{rem}

\begin{df}
Two non-vanishing admissible vector fields $T_0,T_1$ are \textit{IA-homotopic} if there exists a continuous map $\mathcal T:[0,1]\times E \to E$ such that $\mathcal T(0,\cdot) =T_0$, $\mathcal T(1,\cdot)=T_1$, and $\mathcal T(s,\cdot)$ is non-vanishing and admissible for every $s\in [0,1]$. 
\end{df}

\begin{prop}
\label{prop:1}
The Conley index of the maximal bounded invariant set for a flow $\eta$ on $E\times \R$ generated by a vector field of the form $(\mathcal T(x,\lambda), g(x,\lambda))$, where $\mathcal T:E\times \R\to E$ is a family of admissible 
vector fields with $T_{\lambda_0}:=\mathcal T(\cdot,\lambda_0)$ and $T_{\lambda_0+1}:=\mathcal T(\cdot,\lambda_0+1)$ non-vanishing,
 depends only on the IA-homotopy classes of $T_{\lambda_0}$ and $T_{\lambda_0+1}$. 
\end{prop}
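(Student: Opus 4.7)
The plan is to reduce the claim to the continuation property of the Conley index applied to a suitable two-parameter family of flows. Suppose we are given $\mathcal T^0$ and $\mathcal T^1$ as in the statement, with IA-homotopies $H_0:[0,1]\times E\to E$ from $T^0_{\lambda_0}$ to $T^1_{\lambda_0}$ and $H_1:[0,1]\times E\to E$ from $T^0_{\lambda_0+1}$ to $T^1_{\lambda_0+1}$. First I would construct a continuous two-parameter family
$$\widetilde{\mathcal T}:[0,1]\times E\times [\lambda_0,\lambda_0+1]\to E$$
of admissible vector fields satisfying $\widetilde{\mathcal T}(0,\cdot,\cdot)=\mathcal T^0$, $\widetilde{\mathcal T}(1,\cdot,\cdot)=\mathcal T^1$, $\widetilde{\mathcal T}(s,\cdot,\lambda_0)=H_0(s,\cdot)$, and $\widetilde{\mathcal T}(s,\cdot,\lambda_0+1)=H_1(s,\cdot)$ for every $s\in [0,1]$. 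This is a standard extension problem on the square $[0,1]\times [\lambda_0,\lambda_0+1]$: the prescribed boundary data agree at the four corners thanks to the matching conditions of the IA-homotopies, and since admissibility is preserved by convex combinations of the compact parts $K=T-L$, an explicit extension can be written down using, for instance, a partition of unity on the square.

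With $\widetilde{\mathcal T}$ in hand, the hypotheses of Lemma~\ref{lem:7} and Corollary~\ref{cor:1} are met for the two-parameter family, since $\widetilde{\mathcal T}(s,\cdot,\lambda_0)$ and $\widetilde{\mathcal T}(s,\cdot,\lambda_0+1)$ are non-vanishing for every $s\in[0,1]$. These lemmas then produce a common isolating neighborhood of the form $A(r_0,R_0)\times [\lambda_0,\lambda_0+1]$ for every flow $\eta_s$ generated by $(\widetilde{\mathcal T}(s,\cdot,\cdot),g)$, $s\in [0,1]$. Invoking the continuation property of the Conley index recorded in the Remark following Lemma~\ref{lem:2}, the Conley index of the maximal bounded invariant set is independent of $s\in [0,1]$; in particular it coincides for $s=0$ and $s=1$, which is exactly the assertion of the proposition.

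The main obstacle I expect lies in verifying the uniform non-degeneracy estimate \eqref{eq:second} for the family $\widetilde{\mathcal T}$, since only this estimate guarantees that Corollary~\ref{cor:1} applies uniformly in $s$. The argument goes through by repeating the compactness-and-Fredholm argument of Lemma~\ref{lem:5}: assuming by contradiction that the infimum of $\|\widetilde{\mathcal T}(s,x,\lambda)\|$ on the unit sphere tends to zero for some sequence $(s_n,x_n,\lambda_n)$ with $\lambda_n\to\lambda_0$ (the case $\lambda_n\to\lambda_0+1$ being analogous), the splitting $\widetilde{\mathcal T}=L+\widetilde K$ together with the compactness of $\widetilde K$ and the compactness of $[0,1]$ allows the extraction of a limit $(s_\infty,x_\infty)$ with $\|x_\infty\|=1$ and $H_0(s_\infty,x_\infty)=0$, contradicting the non-vanishing assumption built into the IA-homotopy $H_0$. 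Once this uniform bound is established, Lemma~\ref{lem:7} and Corollary~\ref{cor:1} apply verbatim and the continuation argument closes the proof.
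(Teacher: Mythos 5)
Your proposal is correct and follows essentially the same route as the paper's proof: reduce to a two-parameter family of admissible vector fields whose $\lambda$-endpoints remain non-vanishing throughout, then invoke Lemma~\ref{lem:7}, Corollary~\ref{cor:1}, and the continuation property of the Conley index. The only cosmetic difference is that the paper builds the homotopy in two explicit steps (first homotoping $\mathcal T$ and $\mathcal T'$ linearly to their affine-in-$\lambda$ versions $\widetilde{\mathcal T}$, $\widetilde{\mathcal T}'$, then connecting those via the endpoint IA-homotopies), whereas you assemble a single two-parameter extension over the square at once; both rest on the convexity of the class of admissible vector fields and on the same compactness argument for the uniform estimate \eqref{eq:second}.
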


\begin{proof}
Let $\mathcal T,\mathcal T'$ be two families as in the statement of the proposition, and suppose that $\mathcal H_{\lambda_0}$ resp. $\mathcal H_{\lambda_0+1}$ are 
IA-homotopies between $T_{\lambda_0}$ and $T_{\lambda_0}'$ resp. $T_{\lambda_0+1}$ and $T_{\lambda_0+1}'$.
In the first step we homotope $\mathcal T$ linearly to 
$$\widetilde{\mathcal T}(x,\lambda ):= (\lambda_0+1-\lambda)T_{\lambda_0}(x) + (\lambda-\lambda_0)T_{\lambda_0+1}(x)$$
and $\mathcal T'$ linearly to 
$$\widetilde{\mathcal T}'(x,\lambda ):= (\lambda_0+1-\lambda)T_{\lambda_0}'(x) + (\lambda-\lambda_0)T_{\lambda_0+1}'(x).$$
Notice that by construction such homotopies are through admissible vector fields which are non-vanishing at the end-points. In the second step, we use the IA-homotopies
$\mathcal H_{\lambda_0}$ and $\mathcal H_{\lambda_0+1}$ to obtain a homotopy between $\widetilde{\mathcal T}$ and $\widetilde{\mathcal T}'$. By construction, Lemma~\ref{lem:7} and Corollary~\ref{cor:1} yield now a continuation between $\mathcal T$ and $\mathcal T'$.
\end{proof}


\section{Proof of the main theorem}

\subsection{The $C^0$-small case} In this subsection we sketch the proof Theorem~\ref{thm:main} in the particular case of Hamiltonians which are sufficiently small in the $C^0$-norm. We will not provide full details, since we will prove Theorem~\ref{thm:main} rigorously in the next subsection for an arbitrary Hamiltonian. However, we think it is important to 
highlight the main steps of the proof in such an easier case since, as we will see below, the general case will reduce after
applying several successive IA-homotopies to a situation which can be dealt with with analogous ideas.

Suppose first that $H_0$ vanish identically. Then, we readily see that 
$$F_\lambda(x) = - J \dot x + 2\pi \lambda x$$ 
does not vanish for $\lambda \in \R\setminus \Z$, unless $x=0$. In particular, $\jmath^* F_\lambda$ is admissible and non-vanishing for every $\lambda\in \R\setminus \Z$. 
Choose for instance $\lambda_0=-1/2$. Comparing $\jmath^*F_{-1/2}$ with $\jmath^* F_{1/2}$, we see
that they have positive scalar product on $E^+\oplus E^-$, and that 
$$\jmath^* F_{-1/2} |_{E^0} = - \pi \cdot \text{id}, \quad \jmath^*F_{1/2} |_{E^0} = \pi \cdot \text{id}.$$
For this reason, the linear homotopies from 
$$\nabla \mathbb A_0 (x,\lambda) = \big (\jmath^*(-J\dot x + 2\pi \chi(\lambda) x), \pi (\chi'(\lambda) \|x\|_2^2 -1 ) \big )$$
to 
$$\big (\jmath^*(-J\dot x + 2\pi \chi(\lambda) x^0), \pi (\chi'(\lambda) \|x\|_2^2 -1 ) \big ) = \big ( x^+ -x^- + 2\pi \chi(\lambda)x^0, \pi (\chi'(\lambda) \|x\|_2^2 -1 ) \big )$$
and then to 
$$\big (x^+ -x^- + 2\pi \chi(\lambda)x^0, \pi (\chi'(\lambda) |x^0|^2 -1 ) \big )$$
define a continuation (c.f. Lemma~\ref{lem:7} and Corollary~\ref{cor:1} above), where $x=x^++x^-+x^0$ denotes the canonical orthogonal splitting of $x\in E\cong E^+\oplus E^-\oplus E^0$. The last vector field can be seen as a product vector field in 
$$(E^\oplus \oplus E^- ) \times  (E^0\oplus\R)$$
and it is not hard to see that the set 
$$B(E^+\oplus E^-) \times \big ( A(1/2,2) \times [-1/2,1/2]\big )$$
is a product isolating neighborhood for the induced flow. Here, $B(E^+\oplus E^-)$ denotes the unit ball in $E^+\oplus E^-$, and $A(1/2,2)$ the annulus in $E^0$ with inner radius $1/2$ and outer radius $2$. 
The Conley index for the flow on $E^+\oplus E^-$ is non-trivial since it is a sphere spectrum.
An index pair for the flow on $E^0\oplus \R$ can be explicitly written as 
$$(N,L) = \Big ( A(1/2,2) \times [-1/2,1/2], A(1/2,2)\times \{-1/2\} \cup S(1/2)\times [-1/2,0] \cup S(2)\times [0,1/2]\Big ),$$
where here $S(r)$ denotes the sphere of radius $r$ in $E^0$.
Since the index pair is $S^1$-invariant, we can quotient it out, and the resulting sets $N/S^1$ and $(N/S^1)/(L/S^1)$ have the homotopy types of $\C \mathbb P^n$ and $\C\mathbb P^n\wedge S^1$, the smash product 
of $\C\mathbb P^n$ and $S^1$, respectively, and the relative cup-length is equal to $n+1$. The product formula for the relative cup-length given in Lemma~\ref{lem:3}, 3) together with the invariance under continuation 
yields now that the relative cup-length of the flow generated by $\nabla \A_0$ is at least $n+1$. 

Let now $H_0:\T\times \C\mathbb P^n\to \R$ be a smooth Hamiltonian such that $\|H_0\|_{\infty}<\pi/2$. 
A computation shows that also in this case $\jmath^*F_{-1/2}$ and $\jmath^* F_{1/2}$ are admissible and non-vanishing (roughly speaking, one sees that the only part of $\nabla H$, $H$ being the lift of $H_0$ to $\R^{2n+2}$, that can ``cancel out'' with the term $\pm \pi x$ thus making the vector fields $\jmath^* F_{\pm 1/2}$ vanishing is $2H(x)$). Also, since for every $s\in [0,1]$ the Hamiltonian $s\cdot H_0$ satisfies the 
condition $\|s\cdot H_0\|_{\infty}< \pi/2$ as well, we 
see that the linear homotopy from $H$, the lift of $H_0$ to $\R^{2n+2}$, to the trivial Hamiltonian yields a continuation (indeed, $\jmath^* F_{-1/2}$ and $\jmath^* F_{1/2}$ are admissible and non-vanishing 
throughout the whole homotopy). The claim follows now from the case of the trivial Hamiltonian treated before. 

Before turning our attention to the case of general Hamiltonians we shall notice that, for a Hamiltonian which does not satisfy the condition $\|H_0\|_{\infty}<\pi/2$, the linear homotopy $s\mapsto s\cdot H_0$ does 
not yield a continuation in general. This can be seen already taking 
$$H_0(t,x) \equiv \frac pi 2 \qquad (\text{i.e.} \ H(t,x) = \frac \pi 2\cdot |x|^2) .$$
Indeed, there is no $\lambda_0\in \R$ such that $\jmath^* F_{\lambda_0}$ is non-vanishing throughout the whole linear homotopy. In particular, the argument above cannot be extended 
to arbitrary Hamiltonians. Nevertheless, we will see in the next subsection that even in the general case, after suitably changing $\jmath^* F_{\lambda_0}$ and $\jmath^* F_{\lambda_0+1}$ within their IA-homotopy classes, 
we can reduce to a situation similar to the one of $C^0$-small Hamiltonians. More precisely, the modified vector fields will have a product form and 
coincide on ``high modes'' and will be equal to $L$, there will be precisely one mode (a $2n+2$-dimensional 
subspace of $E$ of the form $E_N$ for some $N\in \Z$) for which the first one will be given by -id and the second one by id, and they will also coincide for all other ``low modes''. Even though we cannot explicitly compute 
the Conley index for the flow in the low modes, we will be able to show that the Conley index is non-trivial, and this will be enough to deduce the Arnold conjecture in a similar way as above, namely using the product 
formula for the relative cup-length and the invariance under continuation.


\subsection{The general case}


Recall that 
$$\nabla \A_H (x,\lambda) = \Big ( \jmath^* \big (-J\dot x - \nabla H(x) + 2\pi \chi(\lambda) x\big ), g(x,\lambda)\Big ),$$
where $\chi:\R\to \R$ satisfies ($\chi.0$)-($\chi.3$) and the function $g$ is given by~\eqref{eq:glambda}. In particular 
$$\nabla \A_H (\cdot ,\lambda_0) = (\jmath^* F_{\lambda_0}(\cdot), -\pi ), \qquad \nabla \A_H (\cdot ,\lambda_0+1 ) = (\jmath^* F_{\lambda_0+1}(\cdot), -\pi ).$$
By construction the vector fields $\jmath^* F_{\lambda_0}=L+\jmath^*b_{\lambda_0}$ and $\jmath^* F_{\lambda_0+1}=L+\jmath^*b_{\lambda_0+1}$,
$$b_\lambda(x) := -\nabla H(x) + 2\pi \chi(\lambda)x,$$
 are admissible and non-vanishing; in particular, $0\in E$ is the 
only critical point of the restriction of $\A_H$ to $\{\lambda=\lambda_0\}$ and $\{\lambda=\lambda_0+1\}$ which will be denoted respectively by 
$\A_H(\cdot,\lambda_0)$ and $\A_H(\cdot,\lambda_0+1)$. We denote with $h_{\lambda_0}$ the 
Conley index corresponding to $\A_H(\cdot,\lambda_0)$.  

\begin{lm}
The Conley index $h_{\lambda_0}$ is non-trivial. 
\label{lem:8}
\end{lm}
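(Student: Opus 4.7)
The plan is to construct an IA-homotopy from $\jmath^* F_{\lambda_0}$ to a model vector field of product form across Fourier modes, and to deduce non-triviality of $h_{\lambda_0}$ from that product structure. This follows the blueprint sketched at the end of Subsection~4.1.

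First, since $\jmath^* F_{\lambda_0}$ is admissible and non-vanishing, $\{0\}$ is its unique zero and, because the underlying flow is gradient-like for $\A_H(\cdot,\lambda_0)$, is also the entire maximal bounded invariant set: any bounded orbit of a gradient flow has its $\alpha$- and $\omega$-limit sets contained in the rest-point set $\{0\}$, which forces $\A_H(\cdot,\lambda_0)$ to be constant along the orbit, so the orbit itself is constantly equal to $0$. Moreover, the Conley index of a non-vanishing admissible $\mathcal L\mathcal S$-vector field is invariant under IA-homotopy: the same argument as in Lemma~\ref{lem:7} gives a common isolating neighborhood along any IA-homotopy, and the continuation property contained in the remark after Lemma~\ref{lem:2} then applies. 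It is therefore enough to compute $h_{\lambda_0}$ after passing to a convenient IA-homotopic model.

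The central construction is an IA-homotopy from $\jmath^* F_{\lambda_0}$ to a product vector field
$$\widetilde F_{\lambda_0} \;=\; L|_{E^{\mathrm{high}}} \,\oplus\, (-\mathrm{id})|_{E_N} \,\oplus\, V|_{E^{\mathrm{low}}},$$
where $E=E^{\mathrm{high}}\oplus E_N\oplus E^{\mathrm{low}}$ splits into the modes $|k|>M$, a distinguished single mode $E_N$, and the remaining finite-dimensional block. The first stage is a smooth truncation of the compact part $\jmath^* b_{\lambda_0}$ to $E^{-M,M}$: since $\jmath^*$ decays in operator norm as $|k|\to\infty$, this truncation is $H^{1/2}$-small and, by a parametric version of Lemma~\ref{lem:5}, preserves non-vanishing once $M$ is large enough; the vector field then equals $L$ on $E^{\mathrm{high}}$. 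The second stage is a deformation inside the finite-dimensional block $E^{-M,M}$ that decouples the mode $E_N$ and replaces the restriction there by $-\mathrm{id}$; existence of such a homotopy through non-vanishing admissible vector fields is a classical Leray--Schauder-type degree argument, and produces some admissible non-vanishing $V$ on $E^{\mathrm{low}}$.

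Passing to sufficiently large finite-dimensional approximations in the sense of Lemmas~\ref{lem:1}-\ref{lem:2}, the product formula of Lemma~\ref{lem:3}(3) then yields
$$h_{\lambda_0} \;\simeq\; h(L|_{E^{\mathrm{high}}}) \,\wedge\, S^0 \,\wedge\, h(V),$$
where $h((-\mathrm{id})|_{E_N})=S^0$ is the Conley index of a radial attractor of dimension $2n+2$ and $h(L|_{E^{\mathrm{high}}})$ is a stable sphere coming from the Fredholm structure of $L$. Thus non-triviality of $h_{\lambda_0}$ reduces to non-triviality of $h(V)$, which follows because the Conley index of $\{0\}$ for a non-vanishing admissible vector field on a finite-dimensional space has Euler characteristic equal up to sign to the Brouwer degree on a small sphere around $0$, an invariant preserved along the IA-homotopy and non-zero because $\jmath^* F_{\lambda_0}$ is nowhere vanishing to start with. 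The main obstacle is the second stage of the construction: decoupling $E_N$ through non-vanishing vector fields while ensuring that the accompanying factor $V$ on $E^{\mathrm{low}}$ keeps a non-zero degree is a delicate finite-dimensional degree bookkeeping, and is the technical heart of the proof.
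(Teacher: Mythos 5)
There is a genuine gap in the final step of your argument, and it is exactly where the paper places its one essential idea. You reduce non-triviality of $h_{\lambda_0}$ to showing that a certain finite-dimensional degree is non-zero, and then assert that this degree is ``non-zero because $\jmath^* F_{\lambda_0}$ is nowhere vanishing to start with.'' That implication is false: a vector field on $\R^m$ that vanishes only at the origin can perfectly well have Brouwer degree $0$ around the origin (the normalized field need not be a degree-$\pm 1$ map of $S^{m-1}$). Non-vanishing gives you a well-defined degree, not a non-zero one. Without an extra structural input, the argument collapses at the very last line.

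The paper's proof is far shorter and supplies precisely that missing input. It passes directly to a finite-dimensional approximation $h_{\lambda_0}^{-n_0,n_0}$ (Lemma~\ref{lem:2}), invokes McCord's theorem (\cite{McCord}) identifying the Euler characteristic of the Conley index with the degree of the vector field, and then observes that because $\A_H(\cdot,\lambda_0)$ is $S^1$-invariant its gradient is $S^1$-equivariant, hence in particular odd ($\Z_2$-equivariant); the degree of an odd vector field is odd, therefore non-zero. This equivariance/parity argument is the crux, and it does not appear anywhere in your proposal. All of your elaborate IA-homotopy machinery (truncation, decoupling a single mode, product formula) is unnecessary for this lemma — it is essentially a rehearsal of Steps 1--6 of Section 4.2, which the paper performs \emph{after} Lemma~\ref{lem:8} and for a different purpose (computing the cup-length, not merely non-triviality). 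If you keep your structure, you must still, at the end, justify the non-vanishing of the degree, and the only available tool is the same oddness argument the paper uses; once you have that, the product reduction is redundant.
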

\begin{proof}
Let $n_0\in \N$ be as in Lemma~\ref{lem:2}. Clearly, it is enough to show that $h_{\lambda_0}^{-n_0,n_0}$ is 
non-trivial. Since $\A_H(\cdot,\lambda_0)$ is $S^1$-invariant, its gradient is $S^1$-equivariant, hence in particular $\Z_2$-equivariant (i.e. odd). Denoting by $\chi(h_{\lambda_0}^{-n_0,n_0})$ 
the Euler-characteristic of $h_{\lambda_0}^{-n_0,n_0}$, a result of McCord (see \cite{McCord}) implies that 
$$\chi(h_{\lambda_0}^{-n_0,n_0}) = \text{deg} (\nabla_x \A_H (\cdot, \lambda_0)\Big |_{E^{-n_0,n_0}}) \neq 0$$
as the degree of an odd vector field is odd. 
\end{proof}

We are now ready to homotope $\jmath^* F_{\lambda_0}$ and $\jmath^* F_{\lambda_0+1}$ within their 
IA-homotopy classes to vector fields which are more accessible for computations. This will be done in several steps. 
Roughly speaking we will homotope $\jmath^* F_{\lambda_0}$ and $\jmath^* F_{\lambda_0+1}$ to vector fields $V_{\lambda_0}$ and $V_{\lambda_0+1}$ respectively which are of ``product type'' and 
identical on all ``modes'' $E_k$ besides exactly one, namely $E_{-N-1}$ for $N\in \N$ large enough, where $V_{\lambda_0}$ is given by -id and $V_{\lambda_0+1}$ is given by id. Moreover, both vector fields 
coincide with $L$ on $E_{-N-1,N+1}^\perp$. This implies that the relative cup-length of the Conley index of $\nabla \A_H$ can be computed by passing to the finite dimensional approximation in $E_{-N-1,N+1}$ and using the product formula.

We notice already at this point, and we will not mention it again, that the isolating neighborhood will be of the form 
$A(r_0,R_0)\times [\lambda_0,\lambda_0+1]$, for suitable $r_0<R_0$, throughout all steps. 

\vspace{2mm}

\textbf{Step 1:} We change $\jmath^* F_{\lambda_0+1}$ within its IA-homotopy class to the conjugation of $\jmath^*F_{\lambda_0}$ by a suitable shift operator. To do this, 
we start comparing $F_{\lambda_0}$ and $F_{\lambda_0+1}$. We define the shift operator
$$\text{Sh}:E\to E,\qquad x \mapsto e^{2\pi J t} x$$
and readily compute using the $S^1$-equivariance of $\nabla H$
 \begin{align*}
F_{\lambda_0} ( \text{Sh} (x)) &= - J \dot{(\text{Sh} (x))} - \nabla H(\text{Sh} (x)) + 2 \pi \lambda_0 \text{Sh}(x) \\
			& = \text{Sh} (-J \dot x ) + 2\pi \text{Sh(x)} - \text{Sh}(\nabla H(x)) + 2\pi \lambda_0 \text{Sh}(x)\\
			& = \text{Sh} ( - J \dot x - \nabla H(x) + 2\pi (\lambda_0+1) x )\\
			&= \text{Sh}(F_{\lambda_0+1}(x)),
\end{align*}
which can be equivalently written as 
$$F_{\lambda_0+1} = \text{Sh}^{-1} \circ F_{\lambda_0} \circ \text{Sh}.$$
Hence,
\begin{equation}
\jmath^* F_{\lambda_0+1} = \jmath^*( \text{Sh}^{-1} \circ F_{\lambda_0} \circ \text{Sh}).
\label{eq:conjugationflambda}
\end{equation}
In other words, $\jmath^* F_{\lambda_0+1}$ is almost the conjugation of $\jmath^* F_{\lambda_0}$ by the shift operator Sh: this would indeed be the case if $\jmath^*$ and Sh commuted.
Even though this is not the case, we show now that we can homotope $\jmath^* F_{\lambda_0+1}$ within its IA-homotopy class to the conjugation of $\jmath^*F_{\lambda_0}$ by Sh:
$$\text{Sh}^{-1}\circ (\jmath^* F_{\lambda_0}) \circ \text{Sh}.$$
To do that we consider the linear combination 
\begin{equation}
s \mapsto \mathcal T_s := s \cdot \jmath^*( \text{Sh}^{-1} \circ F_{\lambda_0} \circ \text{Sh}) + (1-s) \cdot \text{Sh}^{-1}\circ (\jmath^* F_{\lambda_0}) \circ \text{Sh}.
\label{eq:IAhomotopy}
\end{equation}
Notice that $\mathcal T_s$ is admissible for every $s\in [0,1]$, as
$$\text{Sh}^{-1}\circ (\jmath^* F_{\lambda_0}) \circ \text{Sh} = \text{Sh}^{-1} \circ L \circ \text{Sh} +  \text{Sh}^{-1} \circ \jmath^* b_{\lambda_0} \circ \text{Sh},$$
and it is straightforward to check that $\text{Sh}^{-1}\circ L \circ \text{Sh}$ and $L$ coincide up to a finite rank operator (indeed, they coincide on $(E^{-1,0})^\perp$). 
Thus, in order to show that \eqref{eq:IAhomotopy} defines an IA-homotopy it suffices to prove that $\mathcal T_s$ is non-vanishing
for every $s\in [0,1]$.  To do this, we observe that, since 
$$\mathcal T_s = [s\cdot \jmath^* \circ \text{Sh}^{-1} + (1-s) \cdot  \text{Sh}^{-1}\circ \jmath^* ] \circ F_{\lambda_0}\circ \text{Sh}$$
and $F_{\lambda_0}$ and $\text{Sh}$ are non-vanishing, all we need to show is that the linear operator
$$s\cdot \jmath^* \circ \text{Sh}^{-1} + (1-s) \cdot  \text{Sh}^{-1}\circ \jmath^*$$
is non-vanishing. To do that, we compute its values on a Hilbert basis: denoting by $e_k$ any basis element of $E_k$ (which is a $2n+2$-dimensional vector space) and recalling that 
$$\jmath^* e_k = \frac{1}{|k|} e_k, \quad \forall k\neq 0, \qquad \text{and}\ \ \jmath^* e_0 = e_0,$$
we compute for $k\neq 0,1$
\begin{equation}
(s\cdot \jmath^* \circ \text{Sh}^{-1} + (1-s) \cdot  \text{Sh}^{-1}\circ \jmath^*)e_k =\Big ( \frac{s}{|k-1|} + \frac{1-s}{|k|} \Big ) e_{k-1} \neq 0 , \quad \forall s \in [0,1].
\label{eq:nonvanishing1}
\end{equation}
For $k=0,1$ we have instead
\begin{equation}
(s\cdot \jmath^* \circ \text{Sh}^{-1} + (1-s) \cdot  \text{Sh}^{-1}\circ \jmath^*)e_k=e_{k-1}.
\label{eq:nonvanishing2}
\end{equation}
From \eqref{eq:nonvanishing1} and \eqref{eq:nonvanishing2} we readily see that $\mathcal T_s$ is non-vanishing for every $s\in [0,1]$. 
Proposition~\ref{prop:1} therefore implies that the Conley index of the maximal bounded invariant set for $\nabla \A_H$ is equal to the Conley index of the maximal bounded invariant set for 
$(\mathcal T(x,\lambda), g(x,\lambda))$, where $\mathcal T$ is given by 
$$\mathcal T (x,\lambda) := (\lambda_0+1-\lambda) \cdot \jmath^* F_{\lambda_0} + (\lambda-\lambda_0)\cdot \text{Sh}^{-1}\circ \jmath^* F_{\lambda_0}\circ \text{Sh}.$$

\textbf{Step 2:} We change the endpoints of $\mathcal T$, namely $\jmath^*F_{\lambda_0}$ and $\text{Sh}^{-1}\circ \jmath^* F_{\lambda_0}\circ \text{Sh}$, within their IA-homotopy class in such a 
way that the new endpoints coincide with $L$ for high modes (roughly speaking, on $(E^{-N,N})^\perp$ for some $N\in \N$ large enough). This is possible since both endpoints are of the form $L+K$ for some 
compact $K:E\to E$. Therefore, we can find $n_0\in \N$ such that both $\jmath^*F_{\lambda_0}$ and $\text{Sh}^{-1}\circ \jmath^* F_{\lambda_0}\circ \text{Sh}$ have positive scalar product\footnote{Such an $n_0$ is the 
same one appearing in Lemmas~\ref{lem:1}, \ref{lem:2}, and \ref{lem:4}.} with $L$ on $(E^{-n_0,n_0})^\perp$. We thus choose $N>n_0+1$ and homotope $\jmath^*F_{\lambda_0}$ within its IA-homotopy class linearly to 
$$V_{\lambda_0}:= L + \pi^{-N,N}\circ \jmath^* b_{\lambda_0} \circ \pi^{-N,N}$$
and $\text{Sh}^{-1}\circ \jmath^* F_{\lambda_0}\circ \text{Sh}$ to 
$$\widetilde V_{\lambda_0+1} := \text{Sh}^{-1}\circ V_{\lambda_0} \circ \text{Sh}.$$
Notice that $V_{\lambda_0}\equiv L$ on $(E^{-N,N})^\perp$, whereas $\widetilde V_{\lambda_0+1}\equiv L \equiv \text{Sh}^{-1}\circ L \circ \text{Sh}$ on $(E^{-N-1,N-1})^\perp$. 

\vspace{2mm}

\textbf{Step 3:} We unshift the vector field $\widetilde V_{\lambda_0+1}$. To do that we set $M\in GL(E)$ by 
$$Mx := \left \{\begin{array}{r} \text{Sh} (x) \qquad \qquad \qquad x \in E^{-N-1,N}, \\ \text{Sh}^{-2(N+1)}(x)  \  \qquad \ \qquad  \ x \in E_{N+1},\\ x \qquad \qquad x \in (E^{-N-1,N+1})^\perp,\end{array}\right .$$
and consider a path $\sigma :[0,1]\to GL(E)$ connecting id to $M$ such that\footnote{This is possible since $M|_{E^{-N-1,N+1}}\in SO(E^{-N-1,N+1})$.} $\sigma \equiv$ id on $(E^{-N-1,N+1})^\perp$.
Conjugating $V_{\lambda_0+1}$ by $\sigma$ defines an IA-homotopy between $\widetilde V_{\lambda_0+1}$ and $V_{\lambda_0+1}:=\sigma(1)\circ \widetilde V_{\lambda_0+1}\circ \sigma(1)^{-1}$, and a straightforward 
computation shows that 
$$ V_{\lambda_0+1} x = \left \{\begin{array}{r} V_{\lambda_0} x \qquad \qquad \qquad x \in E^{-N,N}, \\ x \qquad \qquad \qquad \ \  x \in E_{-N-1}, \\ Lx \qquad \qquad x \in (E^{-N-1,N})^\perp.\end{array}\right.$$
The invariance under IA-homotopies (c.f. Proposition~\ref{prop:1}) implies once again that we can compute the Conley index using the vector field $(\mathcal T(x,\lambda),g(x,\lambda))$ where now $\mathcal T$ is given by
$$\mathcal T(x,\lambda ) := (\lambda_0+1-\lambda) \cdot V_{\lambda_0} + (\lambda-\lambda_0)\cdot V_{\lambda_0+1}.$$
An easy inspection of the explicit formula for $V_{\lambda_0+1}$ shows that $V_{\lambda_0+1}$ coincides with $V_{\lambda_0}$ everywhere besides on $E_{-N-1}$, 
on which we have $V_{\lambda_0}=-$id and $V_{\lambda_0+1}=\, $id. This implies that the vector field $\mathcal T$ can be more conveniently written using the orthogonal splitting 
$$E \cong E^{-N,N} \oplus E_{-N-1} \oplus \bigoplus_{k \in \Z \setminus \{-N-1,...,N\}} E_k \ \  \ni (y,z,w)$$
as 
$$\mathcal T( (y,z,w),\lambda ) = \big ( V_{\lambda_0} y, (-1+2(\lambda-\lambda_0))z,Lw\big ).$$

\textbf{Step 4:} We consider the family of vector fields
$$(\mathcal T((y,z,w),\lambda), g((s\cdot y, z,s\cdot w),\lambda))$$
to homotope the function $g((y,z,w),\lambda)$ linearly to $(g(0,z,0),\lambda)$.
To do this, all we need to show  is that there exists an isolating neighborhood which is common for every $s\in [0,1]$. In virtue of \cite[Prop 2.4]{St} (see also \cite[Prop. 2.14]{StWa}), as in Lemma~\ref{lem:7}
it suffices to prove the following: if 
$\{((y_m,z_m,w_m), \lambda_m,s_m)\}\subset E\times [\lambda_0,\lambda_0+1]\times [0,1]$ is a sequence such that 
$$\{(\mathcal T((y_m,z_m,w_m),\lambda_m), g((s_m\cdot y_m, z_m,s_m\cdot w_m),\lambda_m)) \}\subset E\times \R$$
is bounded, then $\{(y_m,z_m,w_m)\}$ is bounded in $E$. We start noticing that 
$$ \| \mathcal T((y_m,z_m,w_m),\lambda_m)\|^2 = \|V_{\lambda_0}y_m\|^2 + |-1+2(\lambda_m-\lambda_0)| \cdot \|z_m\|^2 + \|w_m\|^2$$
and this immediately implies that $\{y_m\}$ and $\{w_m\}$ are bounded (recall that $V_{\lambda_0}$ is homogeneous and bounded away from zero on the sphere of radius one). To show that $\{z_m\}$ is bounded, we observe that the 
boundedness of  
$$g((s_m\cdot y_m,z_m,s_m\cdot w_m),\lambda_m)=\pi \big (\chi'(\lambda_m) \big [ |s_m|^2 \cdot (\|y_m\|_2^2 + \|w_m\|_2^2) + \|z_m\|_2^2\big ] -1 \big )$$
implies that $\{z_m\}$ is bounded in $L^2$, which is equivalent to boundedness in $H^{1/2}$ as $\{z_m\}\subset E_{-N-1}$. Therefore, to compute the Conley index we can use the vector field 
$$\big (\mathcal T((y,z,w),\lambda), g(z,\lambda)\big ),$$
where for notational convenience we set $g(z,\lambda):=g((0,z,0),\lambda).$

\vspace{2mm}

\textbf{Step 5:} We change $\big (\mathcal T((y,z,w),\lambda), g(z,\lambda)\big )$ to its finite dimensional approximation, namely considering the flow on $(E^{-N,N}\oplus E_{-N-1})\times \R$ 
generated by the vector field
$$\big ((V_{\lambda_0} y, (-1+2(\lambda-\lambda_0))z), g(z,\lambda)\big ).$$
This is possible since flow lines with $w\neq 0$ are unbounded in positive and negative direction, being $\mathcal T$ equal $L$ on the $w$-direction. In particular, all critical points of $\mathcal T$ 
are in fact contained in $E^{-N,N}\oplus E_{-N-1}$. The Conley index is here computed by taking 
\begin{align*}
\Omega& := A^{-N-1,N}(r_0,R_0)\times [\lambda_0,\lambda_0+1]\\ & := \big \{(y,z)\in E^{-N,N}\times E_{-N-1}\ \big |\ r_0\leq \|y\|^2 + \|z\|^2 \leq R_0\big \} \times [\lambda_0,\lambda_0+1]
\end{align*}
as isolating neighborhood. We shall also notice that replacing $N$ with some larger $N'$ only changes the Conley index by the suspension by a sphere of suitable dimension, 
and hence in particular the relative cup-length does not change (see \cite[Thm. 2.5]{KGUss}). 

\vspace{2mm}

\textbf{Step 6:} We finally change the isolating neighborhood $\Omega$ to a product isolating neighborhood, namely to 
$$\Omega':= B(r) \times A_{-N-1}(r',R')\times [\lambda_0,\lambda_0+1]\subset \Omega$$
with $A_{-N-1}(r',R') := \{z\in E_{-N-1}\ |\ r'\leq \|z\|\leq R'\}$ for suitable $r,r',R'>0$, in order to employ the formula for the relative cup-length of a product flow 
with product isolating neighborhood, see Lemma~\ref{lem:3}, Item (3). As we show now, replacing $\Omega$ with $\Omega'$ does not change the relative cup-length when passing to the quotient: 
to see this we first observe that
$\Omega$ and $\Omega'$ are $S^1$-equivariantly homotopy equivalent to $S^{(2N+2)(2n+2)-1}$ and $S^{2n+1}$ respectively. Now, 
the quotients $\Omega/S^1$ and $\Omega'/S^1$ are homotopy equivalent to 
 $\mathbb C\mathbb P^{(2N+2)n}$ and $\mathbb C\mathbb P^n$ respectively, and the claim readily follows by Lemma~\ref{lem:3}, Item (2), since the inclusion induces a surjective homomorphism 
 from the cohomology of $\mathbb C\mathbb P^{(2N+2)n}$ to the cohomology of $\mathbb C\mathbb P^n$. 
 
\begin{proof}[Proof of Theorem~\ref{thm:main}]
By contradiction assume that $\A_H$ have $k\leq n$ distinct $S^1$-families of critical points. Then we can find $k$ disjoint isolating neighborhoods whose projection under the $S^1$-action is contractible. 
Such isolating neighborhoods yield in virtue of Lemma~\ref{lem:4} pairwise disjoint contractible isolating neighborhoods for the finite dimensional approximation to $E^{-N-1,N+1}\times [\lambda_0,\lambda_0+1]$.
This by Theorem~\ref{thm:1} would imply for the relative cup-length that 
$$\mathcal Y( \Omega/S^1, \bar \eta^{-N-1,N+1}) \leq k \leq n,$$
where $\bar \eta^{-N-1,N+1}$ denotes the projection of the flow $\eta^{-N-1,N+1}$ on the orbit space. 
On the other hand, since the Conley index $h_{\lambda_0}^{-N-1,N}$ is non-trivial (see Lemma~\ref{lem:8}), by Lemma~\ref{lem:3}, Item (3), we have that $\mathcal Y( \Omega/S^1, \bar \eta^{-N-1,N+1})$ is bounded from 
below by the cup-length of $\mathbb C\mathbb P^n$ plus 1, namely $n+1$ (the construction of an explicit index pair can be found in \cite[Lemma 6.2]{KGUss}; see also Subsection~4.1). This clearly yields a contradiction and completes the proof. 
\end{proof}

%
%



\thebibliography{9999999}

\bibitem[Ab]{Ab} Abbondandolo, A. \textbf{Morse theory for Hamiltonian systems.} CRC Press, 2001.

\bibitem[AAS]{Abbo} Abbondandolo, A., Asselle, L., Starostka, M.; \textbf{Hamiltonian Morse homology in cotangent bundles.} (to appear)

\bibitem[AS]{Asselle} Asselle, L., Starostka, M.; \textbf{The Palais-Smale condition for the Hamiltonian action on a mixed regularity space of loops in cotangent bundles and applications.} Calc. Var. 59 (2020), https://doi.org/10.1007/s00526-020-01762-0



\bibitem[Con]{Conley:78} Conley, C.; \textbf{Isolated invariant sets and the Morse index}, CMBS Regional Conf. Series 38, Amer. Math. Soc. (1978)

\bibitem[CZ]{Conley} Conley, C.; Zehnder, E.; \textbf{The Birkhoff-Lewis fixed point theorem and a conjecture of V.I. Arnold.} Invent. Math. 73 (1983):33-49. 

\bibitem[DzGU]{KGUss} Dzedzej, Z.; G\c{e}ba, K.;  Uss, W. \textbf{The Conley index, cup-length and bifurcation.} J. Fixed Point Theory Appl. 10.2 (2011): 233-252.

\bibitem[El]{Eliashberg} Eliashberg, Y.; \textbf{Estimates on the number of fixed points of area preserving transformations.} Syktyvkar Univerity, preprint (1979).

\bibitem[Fl1]{Floer:87} Floer, A.; \textbf{A refinement of the Conley index and an application to the stability of hyperbolic invariant sets.} Ergodic Theory Dynam. Sys. 7 (1987): 93-103. 

\bibitem[Fl1b]{Floer:88} Floer, A.; \textbf{Morse theory for Lagrangian intersections.} J. Differential Geom. 28 (1988): 513-547. 

\bibitem[Fl2]{Floer:89} Floer, A.; \textbf{Symplectic fixed points and holomorphic spheres.} Commun. Math. Phys. 120 (1989): 575-611.

\bibitem[Fo]{Fo} Fortune, B. \textbf{A symplectic fixed point theorem for $\mathbb{C}\mathbb P^n$.} Invent. Math. 81, no. 1 (1985): 29-46.

\bibitem[FO1]{Fukaya:1} Fukaya, K., Ono, K.; \textbf{Arnold conjecture and Gromov-Witten invariant.} Topology 38, no. 5 (1999):933-1048.

\bibitem[FO2]{Fukaya:2} Fukaya, K., Ono, K.; \textbf{Arnold conjecture and Gromov-Witten invariant for general symplectic manifolds}, The Arnoldfest. Proceedings of a conference in honour of V.I. Arnold for his 
60th birthday, Toronto, Canada, June 15-21 (1997), (al., E. Bierstone (ed.) et, ed.), vol. 24 (1999).

\bibitem[GIP]{gip} G\c{e}ba, K.;  Izydorek, M.; Pruszko, A. \textbf{The Conley index in Hilbert spaces and its applications.} Studia Math. 134.3 (1999): 217-233.

\bibitem[Giv]{Givental} Givental, A. B.; \textbf{A symplectic fixed point theorem for toric manifolìds.} The Floer memorial volume, Birkh\"auser (1995):445-481.

\bibitem[Go]{Golovko} Golovko, R.; \textbf{On variants of Arnold conjecture.} Archivum Mathematikum (Brno) 56 (2020):277-286.

\bibitem[HZ]{Hofer} Hofer, Helmut and Zehnder, Eduard. \textbf{Symplectic Invariants and Hamiltonian Dynamics}, Birkhauser Advanced Texts,  1994.

\bibitem[I]{IzydorekJDE} Izydorek M. \textbf{A cohomological Conley index in Hilbert spaces and applications to strongly indefinite problems.} J. Diff. Equations 170(1) (2001):22-50.

\bibitem[IRSV]{IRSV} Izydorek, M., Rot, T. O., Starostka, M., Styborski, M.,  Vandervorst, R. C. \textbf{Homotopy invariance of the Conley index and local Morse homology in Hilbert spaces.} J. Diff. Equations (2017), 263(11), 7162-7186.

\bibitem[Kr1]{Kragh1} Kragh, T.; \textbf{Fibrancy of symplectic homology in cotangent bundles.} String-Math 2011, Proc. Sympos. Pure Math. 85, Amer. Math. Soc., Providence, RI (2012): 401-407.

\bibitem[Kr2]{Kragh2} Kragh, T.; \textbf{Parametrized ring-spectra and the nearby Lagrangian conjecture.} Geom. Topol. 17 (2013):639-731.

\bibitem[Liu]{Liu} Liu, G., Tian, G.; \textbf{Floer homology and Arnold conjecture}, J. Differential Geom. 49, no. 1 (1998):1-74.

\bibitem[McC]{McCord} McCord, C.K., \textbf{On the Hopf index and the Conley index}, Trans. Amer. Math. Soc. 313, no. 2 (1989), 853-860.


\bibitem[O]{Oh} Oh, Y.-G.; \textbf{A symplectic fixed point theorem on $\mathbb T^{2n} \times \mathbb{C}\mathbb P^k$.} Math. Z. 203, no. 1 (1990): 535-552.

\bibitem[Pol]{Polterovich} Polterovich, L.; \textbf{Hofer's diameter and Lagrangian intersections.} Int. Math. Res. Not., no.4 (1998): 217-223.






\bibitem[Ru]{Ruan} Ruan, Y.; \textbf{Virtual neighborhoods and pseudo-holomorphic curves.} Proceedings of 6th Gokova Geometry-Topology Conference, vol. 23, Gokova (1999).

\bibitem[RO]{Rudyak:99} Rudyak, Yu. B., Oprea, J.; \textbf{On the Lyustrnik-Schnirelmann category of symplectic manifolds and the Arnold conjecture.} Math. Z. 230, no. 4 (1999):673-678. 

    \bibitem[Sal]{Salamon} Salamon, D.; \textbf{Connected simple systems and the Conley index of isolated invariant sets.} Trans. Amer. Math. Soc. 291, no. 1 (1985).

\bibitem[Sch]{Schwarz} Schwarz, M.; \textbf{A quantum cup-length estimate for symplectic fixed points.} Invent. Math. 133 (1998), 353-397.

\bibitem[St]{St} Starostka, M.; \textbf{Connected components of the space of proper gradient vector fields.} J. Fixed Point Theory Appl. 24, no.2  (2022), https://doi.org/10.1007/s11784-021-00900-1

\bibitem[StWa]{StWa} Starostka, M. Waterstraat, N.; \textbf{The E-Cohomological Conley Index, Cup-Lengths and the Arnold Conjecture on $T^{2n}$}. Adv. Nonlinear Stud., vol. 19, no. 3, 2019, pp. 519-528.
\end{document}